\begin{document}
\newtheorem{lemma}{Lemma}[section]
\newtheorem{lemm}[lemma]{Lemma}
\newtheorem{prop}[lemma]{Proposition}
\newtheorem{coro}[lemma]{Corollary}
\newtheorem{theo}[lemma]{Theorem}
\newtheorem{conj}[lemma]{Conjecture}
\newtheorem{prob}{Problem}
\newtheorem{ques}{Question}
\newtheorem{rema}[lemma]{Remark}
\newtheorem{rems}[lemma]{Remarks}
\newtheorem{defi}[lemma]{Definition}
\newtheorem{defis}[lemma]{Definitions}
\newtheorem{exam}[lemma]{Example}

\newcommand{\N}{\mathbf N}
\newcommand{\Z}{\mathbf Z}
\newcommand{\R}{\mathbf R}
\newcommand{\Q}{\mathbf Q}
\newcommand{\C}{\mathbf C}

\newcommand{\todo}[1]{\vspace{5mm}\par \noindent
\framebox{\begin{minipage}[c]{0.95 \textwidth} \tt #1
\end{minipage}} \vspace{5mm} \par}

\title[Discrete Laplace operators without eigenvalues]{Laplace and Schr\"odinger  operators without eigenvalues on homogeneous amenable graphs}
\date{December 18th, 2022}
\author{R. Grigorchuk}
\address{Texas A\&M University}
\email{grigorch@math.tamu.edu}

\author{Ch. Pittet}
\address{Aix-Marseille University CNRS I2M and University of Geneva}
\email{pittet@math.cnrs.fr}

\keywords{Amenable group, amenable graph, Cayley graph, continuous spectrum, discrete Laplace operator, discrete Schr\"odinger operator, eigenvalue, F{\o}lner sequence, integrated density of states (IDS), pure point spectrum}

\subjclass[2020]{Primary: 47A10 ; Secondary: 31C20}
\thanks{The authors acknowledge support of the FNS grant 200020-200400. The first author acknowledges partial support from the University of Geneva and from the Simons Foundation through Collaboration Grant 527814.}
\dedicatory{Dedicated to the memory of Mikhail A. Shubin (1944--2020)}

\begin{abstract}
A one-by-one exhaustion is a combinatorial/geometric sufficient condition for excluding finitely supported eigenfunctions
of Laplace and Schr\"odinger operators on graphs. Isoperimetric inequalities in graphs with a cocompact automorphism group
provide an upper bound on the von Neumann dimension of the space of $l^2$-eigenfunctions. Any finitely generated indicable amenable group has a Cayley graph without $l^2$-eigenvalues.
There exists a finitely generated group $G$ with finite generating sets $S$ and $S'$ such that the adjacency operator of the Cayley graph of $(G,S)$ has no $l^2$-eigenvalue while  the adjacency operator of the Cayley graph of $(G,S')$ has pure point $l^2$-spectrum.
\end{abstract}
\maketitle
\tableofcontents
\section{Introduction}
\subsection{Infinite connected graphs without eigenvalues} Let $\Gamma$ be a weighted connected graph  with  infinite vertex set (see  Subsection \ref{subsection: the path-metric on a connected graph}, and Subsection \ref{subsection: the Hilbert spaces associated to a weighted graph}). Let $\Delta$ be the associated  Laplacian on $\Gamma$ (see  Subsection 
\ref{Laplace}). Which geometric or combinatorial properties of $\Gamma$ imply that the $l^2$-spectrum of $\Delta$ contains no eigenvalue? (In this paper we consider exclusively the $l^2$-spectrum of Laplace and Schrödinger operators; when we speak about an eigenvalue, it is implicitly understood that it corresponds to  an $l^2$-eigenfunction.) A necessary and sufficient condition, based on Bloch analysis, is known for  graphs admitting a cocompact free action by a finitely generated abelian group of automorphisms, see \cite[Proposition 4.2]{HiguchiNomura}. See also \cite[Theorem 4]{Kuch}. Applying this condition, Higuchi and Nomura deduce that the combinatorial Laplacian (for the definition see the end of Subsection \ref{subsection: adjacency and  Markov operators}) on a graph which is the  maximal abelian covering of (the realization of) a connected finite graph having a $2$-factor, has no eigenvalue \cite[Theorem 2]{HiguchiNomura}.
(A finite connected graph $\Gamma$ has a $2$-factor if and only if there exists a finite number of oriented simplicial circles which disjointly embed in $\Gamma$ in such a way that any vertex of $\Gamma$ lies on one of the embedded circles. The maximal abelian cover of the geometric realization $T$ of a finite connected graph $\Gamma$ is the Galois cover of $T$ defined by the  commutator subgroup of the fundamental group of $T$, hence the Galois group is isomorphic to the  first homology group $H_1(T,\mathbb Z)$.)  With the help of these results, Higuchi and Nomura are able to decide, for several examples of planar graphs, wether the  Laplacian admits an eigenvalue or not, see \cite[6. Examples]{HiguchiNomura}. They also ask  for new geometric or combinatorial properties implying the NEP (no eigenvalue property) for graphs whose automorphism group contains a finite index finitely generated abelian subgroup \cite[Problem 6.11]{HiguchiNomura}. A similar question is raised in \cite[page 93]{HarRobVal} for a general finitely generated group. The spectrum of the  combinatorial Laplacian $\Delta$  has been studied  on  Cayley graphs (for the definition of a Cayley graph see Subsection \ref{subsection: the path-metric on a connected graph}) of some finitely generated metabelian groups by several authors (see \cite{BarWoe}, \cite{DicSch}, \cite{GriZuk}, \cite{LehNeuWoe}). We recall the example of the so called ``lamplighter group''. Consider the ring $\mathbb F_2[X,X^{-1}]$ of Laurent polynomials in the variable $X$ with coefficients in the field $\mathbb F_2$ with $2$ elements. Its group of units 
$$\mathbb F_2[X,X^{-1}]^{\times}=\{X^n: n\in\mathbb Z\}\cong\mathbb Z$$ 
acts by multiplication on $\mathbb F_2[X,X^{-1}]$. The lamplighter group $L$ is the corresponding semi-direct product
\[
	L=\mathbb F_2[X,X^{-1}]\rtimes \mathbb Z.
\]
The elements $a=(0,1)$ and $c=(1,0)$ together generate $L$ (right multiplication by $a$ increases the position of the lamplighter by $1$, right multiplication by $c$ switches the lamp the lamplighter stands at). Putting $b=ac$, it is obvious that the set $S'=\{a;b\}$ also generates $L$. In \cite{GriZuk} it is proved that the eigenvalues of the combinatorial Laplacian $\Delta$ of the Cayley graph $\mathcal C(L,S')$ form a dense countable subset of the spectrum $[0,2]$ of $\Delta$ and in \cite{DicSch} and \cite{BarWoe}, an orthonormal Hilbert basis of finitely supported eigenfunctions is constructed. The spectral properties of the lamplighter group have also been investigated in relation with the spectral properties of de Bruijn graphs and spider-web graphs, see \cite{GriLeeNag} and \cite{BalDha}.
Does the structure of the spectrum depend on the set of generators?  The question was brought up in \cite[page 210]{GriZuk}. A positive answer has been given by Grabowski and Virag in an unpublished preprint from 2015 entitled ``Random walks on Lamplighters via random Schr\"odinger operators''. Grabowski and Virag use the work of Martinelli and Micheli \cite{MarMic} to deduce that $L$ has a system of generators with singular continuous spectral measure; we refer the reader to \cite[page 655]{Gra} and to \cite[pages 2, 4, 22, 29]{GriSim} for more details.
The following theorem provides many examples of finitely generated amenable groups with Cayley graphs having no eigenvalues. (We refer the reader to \cite{Ger} and \cite{delaHarGriSil} for the definitions of amenable group and amenable graph.)

\begin{theo}\label{theorem: indicable}(Cayley graphs of indicable amenable groups without eigenvalue.) Let $G$ be a group with an epimorphism
	$$h:G\to\mathbb Z$$
from $G$ to $\mathbb Z$ the infinite cyclic group (in other words $G$ is indicable). 
Assume $G$ is finitely generated. Consider a finite generating set $S$ of $G$ of the form
\[
	S=\{t\}\bigcup K
\]
such that  $h(t)$ generates $\mathbb Z$ and $K\subset\emph{Ker}(h)$. Then the combinatorial Laplace operator on the Cayley graph of $G$ with respect to $S$ has no eigenfunction with finite support.	If moreover $G$ is amenable, then the combinatorial Laplace operator on the Cayley graph of $G$ with respect to $S$ has no eigenfunction.	
\end{theo}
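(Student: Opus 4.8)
The plan is to treat the two assertions separately, exploiting the $\mathbb{Z}$-layering of the Cayley graph furnished by $h$. Normalize so that $h(t)=1$. Since $K\subset\mathrm{Ker}(h)$, every edge coming from a letter of $K$ stays inside a single fibre $h^{-1}(n)$, whereas $t$ (resp.\ $t^{-1}$) moves one fibre up (resp.\ down). Thus $t^{-1}$ is the \emph{only} letter of $S\cup S^{-1}$ that decreases $h$ by one, and right multiplication by $t^{-1}$ is a bijection $h^{-1}(n+1)\to h^{-1}(n)$. This is the structural fact driving both parts.

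For the first assertion, suppose $f\neq 0$ is a finitely supported function with $\Delta f=\lambda f$, and let $M$ be the largest integer with $f|_{h^{-1}(M)}\not\equiv 0$. Pick any $g$ with $h(g)=M+1$. Then $f(g)=0$, and among the neighbours $gs$ $(s\in S\cup S^{-1})$ of $g$ the only one lying in a fibre of index $\le M$ is $gt^{-1}\in h^{-1}(M)$; all the others sit in fibres of index $\ge M+1$, where $f$ vanishes. The eigenvalue equation at $g$ therefore collapses to the statement that a positive multiple of $f(gt^{-1})$ is zero, whence $f(gt^{-1})=0$. Letting $g$ range over $h^{-1}(M+1)$ and using the bijection $g\mapsto gt^{-1}$ onto $h^{-1}(M)$, we conclude $f\equiv 0$ on $h^{-1}(M)$, contradicting the choice of $M$. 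Hence no finitely supported eigenfunction exists; this is exactly the one-by-one exhaustion obtained by peeling fibres from the top.

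For the second assertion I would pass to the von Neumann dimension. The operator $\Delta$ is right convolution by a self-adjoint element of the group ring $\mathbb{C}[G]$, hence commutes with the left regular representation, and for each $\lambda$ the eigenspace $E_\lambda=\ker(\Delta-\lambda)\subset\ell^2(G)$ is a closed $G$-invariant subspace; $\lambda$ is an eigenvalue precisely when $\dim_G E_\lambda>0$, the $G$-trace being faithful. The key input, valid \emph{because $G$ is amenable}, is that this dimension is already detected by finitely supported eigenfunctions: the closed $G$-invariant subspace generated by the finitely supported elements of $E_\lambda$ has the same von Neumann dimension as $E_\lambda$ itself. Equivalently, the atom at $\lambda$ of the integrated density of states, computed by F\o lner averaging, is carried entirely by finitely supported eigenfunctions. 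Combined with the first assertion, this forces $\dim_G E_\lambda=0$, hence $E_\lambda=\{0\}$, so $\Delta$ has no eigenvalue. The fibre-peeling of the second paragraph is routine; the crux, and the step I expect to be the main obstacle, is this amenable reduction: one must justify that amenability makes $\dim_G E_\lambda$ computable locally and reduces it to the finitely supported part, so that the combinatorial non-existence statement upgrades to an $\ell^2$ non-existence statement. This is precisely where the isoperimetric and von Neumann-dimension machinery advertised in the abstract enters.
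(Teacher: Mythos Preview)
Your proof is correct and follows essentially the same approach as the paper: the fibre-peeling argument in your second paragraph is exactly the content of the paper's height-function criterion (Theorem~\ref{theorem: exhaustion by level sets}), and your second step---the amenable reduction from $\ell^2$-eigenfunctions to finitely supported ones via the von Neumann trace and F{\o}lner averaging---is precisely the implication $(5)\Rightarrow(4)$ of Theorem~\ref{theorem: general principles}, which the paper also invokes as a black box at this point. The only cosmetic difference is that you peel fibres from the top using the unique downward neighbour $gt^{-1}$, whereas the paper's general height-function lemma peels from the bottom using the unique upward neighbour; in the Cayley-graph setting both directions are available and the arguments are interchangeable.
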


\begin{rema} We would like to emphasize that it is  not required that $K$ generates $\emph{Ker}(h)$ (which is often not finitely generated). It is easy to check that in a finitely generating group $G$, with an epimorphism $h:G\to\mathbb Z$, there is always a generating set of the required form $S=\{t\}\bigcup K$.
\end{rema}

There are two main steps in the proof of Theorem \ref {theorem: indicable}. These two steps are carefully explained in the remaining part of this introduction, after the formal proof that we give now, assuming all the needed definitions and preliminary results.  

\begin{proof} First step: the homomorphism $h$ is a ``height function'' (see the hypothesis of Theorem \ref{theorem: exhaustion by level sets} for the properties of a height function) on the Cayley graph of $G$ with respect to $S$ hence Theorem \ref{theorem: exhaustion by level sets} applies (the special form of $S$ is essential here) and implies the $\lambda$-unicity for any finite set of vertices and any $\lambda\in\mathbb R$ (see Definition \ref{definition: uniqueness}). Second step: the amenability of $G$ is equivalent to the existence of a F{\o}lner sequence (see Definition \ref{definition: amenable graph}) in the Cayley graph, hence implication $(1)\implies (4)$ from Theorem \ref{theorem: general principles} is true and we conclude that
the $\lambda$-unicity for any finite set of vertices from step one implies that there is no square-summable eigenfunction.
\end{proof}
Combining the above Theorem with results of Grigorchuk and Zuk and Bartholdi and Woess we obtain the following corollary.

\begin{coro}\label{cor: lamplighter} Let $L=\mathbb F_2[X,X^{-1}]\rtimes \mathbb Z$ be the lamplighter group. We denote $a=(0,1)$, $c=(1,0)$, and $b=ac$. Consider the generating sets $S=\{a;c\}$ and $S'=\{a;b\}$.
	\begin{enumerate}
		\item The combinatorial Laplace operator on the Cayley graph of $(L,S)$  has no square-summable eigenfunction.
       \item  The space $l^2(L)$ admits an orthonormal Hilbert basis made of finitely supported eigenfunctions of the combinatorial Laplacian of the Cayley graph of $(L,S')$.
	   \end{enumerate}
	\end{coro}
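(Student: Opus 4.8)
The plan is to derive the two statements from separate sources: part (1) is a direct application of Theorem \ref{theorem: indicable}, while part (2) is quoted from the existing literature on the lamplighter group.

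For part (1), I would first exhibit the indicable structure. The semidirect decomposition $L = \mathbb F_2[X,X^{-1}] \rtimes \mathbb Z$ furnishes the projection onto the second factor as an epimorphism $h : L \to \mathbb Z$. Under $h$ the generator $a = (0,1)$ satisfies $h(a) = 1$, which generates $\mathbb Z$, whereas $c = (1,0)$ lies in the base group $\mathbb F_2[X,X^{-1}] = \operatorname{Ker}(h)$. Hence, taking $t = a$ and $K = \{c\}$, the generating set $S = \{a,c\}$ has exactly the form $\{t\} \cup K$ prescribed in Theorem \ref{theorem: indicable}. Moreover $L$ is finitely generated by $S$ and is amenable, being metabelian (its derived subgroup lies in the abelian base $\mathbb F_2[X,X^{-1}]$). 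All the hypotheses of Theorem \ref{theorem: indicable} are thus met, and the conclusion that holds under the amenability assumption gives that the combinatorial Laplacian on $\mathcal C(L,S)$ has no eigenfunction at all, in particular no square-summable one. This establishes (1).

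For part (2), I would simply appeal to the results recalled in the introduction. By Grigorchuk and Zuk \cite{GriZuk} the spectrum of the combinatorial Laplacian on $\mathcal C(L,S')$ is pure point, its eigenvalues forming a dense countable subset of $[0,2]$; by Dicks and Schick \cite{DicSch} and, independently, by Bartholdi and Woess \cite{BarWoe}, the space $l^2(L)$ admits an orthonormal Hilbert basis consisting of finitely supported eigenfunctions of that operator. This is precisely (2).

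I expect no genuine technical obstacle. The only things to check are the elementary facts that $h(a)$ generates $\mathbb Z$ while $c \in \operatorname{Ker}(h)$ — which is exactly what makes $S$ admissible for Theorem \ref{theorem: indicable} — together with the standard amenability of $L$; part (2) is imported verbatim from the cited works. The interest of the corollary is therefore structural rather than computational: it exhibits a single finitely generated amenable group two of whose Cayley graphs have opposite spectral types, the choice $S = \{a,c\}$ yielding no eigenvalue and the choice $S' = \{a,b\}$ yielding pure point spectrum with a basis of finitely supported eigenfunctions.
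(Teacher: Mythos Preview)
Your proposal is correct and follows essentially the same approach as the paper's own proof: for part (1) you apply Theorem \ref{theorem: indicable} via the projection $h:L\to\mathbb Z$ with $t=a$, $K=\{c\}$, and the amenability of the metabelian group $L$; for part (2) you invoke the cited literature \cite{GriZuk}, \cite{DicSch}, \cite{BarWoe}. The paper's proof is the same, citing only \cite{GriZuk} and \cite{BarWoe} for part (2).
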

\begin{proof} The projection
	\[
		h:\mathbb F_2[X,X^{-1}]\rtimes \mathbb Z\to\mathbb Z
	\]
sends $(0,1)$ to the generator $h(0,1)=1$ of the quotient $$\mathbb Z\cong\{X^n: n\in\mathbb Z\}$$ and $h(1,0)=0$ is the trivial element of $\mathbb Z\cong\{X^n: n\in\mathbb Z\}$. The group $L$ is metabelian hence amenable. Hence Theorem \ref{theorem: indicable} above applies and implies the statement about $(L,S)$. The statement about $(L,S')$ follows from the works of Grigorchuk and Zuk \cite{GriZuk} and Bartholdi and Woess \cite{BarWoe}.
\end{proof}
Theorem \ref{theorem: indicable} applies to several classes of groups (strongly polycyclic groups, free solvable groups, some HNN-extensions,  some wreath products, etc.) and is valid for a general class of operators described in Definition \ref{definition: Schrodinger operator}. In particular it applies to the operators on the lamplighter group considered by Virag and Grabowski \cite[page 655]{Gra}. Although the theorem says nothing about the singular spectrum, its conclusion is equivalent to the continuity of the integrated density of states (see the end of Subsection \ref{subsection: spectral projections}). 
Here are some examples. Take any finitely generated amenable indicable group $\mathfrak{G}$, choose a finite group $\mathfrak{H}$, form the wreath product $G=\mathfrak{H}\wr\mathfrak{G}$. According to Theorem \ref{theorem: indicable} there is a finitely generating set of $G$ such that the combinatorial Laplacian on the associated Cayley graph has no eigenvalue. According to \cite{LehNeuWoe} there exists a Cayley graph of $G$ with weights (defined by a symmetric probability measure on $G$) so that the spectrum of its associated Laplace operator admits a dense subset of eigenvalues.

The first step in the proof of  Theorem \ref{theorem: indicable} is to exclude the existence of a  finitely supported eigenfunction. Given a real number $\lambda$,
we say that the  Laplacian $\Delta$ of a weighted graph $\Gamma$ satisfies $\lambda$-uniqueness on a subset $\Omega$ of vertices of $\Gamma$ if  there is no $\lambda$-eigenfunction of $\Delta$ whose support is included in $\Omega$  (see Definition \ref{definition: uniqueness}). If a finite or infinite countable subset of vertices $\Omega$ admits a one-by-one exhaustion (see Definition \ref{definition: exhaustion}) then $\Delta$ satisfies $\lambda$-uniqueness on $\Omega$ for any $\lambda\in\mathbb R$ (see Theorem \ref{theorem: exhaustion implies lambda-uniqueness}). We emphasize that no hypothesis on the automorphism group of the graph is needed in this implication (no group is involved at this point). A one-by-one exhaustion of $\Omega$ can be understood as an inductive process. First we look for a vertex $v_1$ of the graph $\Gamma$ which is not in $\Omega$ and which has exactly one neighbor $w_1$ belonging to $\Omega$ and we remove $w_1$ from $\Omega$. Then we look  for a vertex $v_2$ not in $\Omega\setminus\{w_1\}$ which has exactly one neighbor $w_2$ belonging to $\Omega\setminus\{w_1\}$ and we remove $w_2$. And so on. If 
\[
	\Omega=\bigcup_n\{w_n\},
\] 
where the union is finite or infinite countable,
then we say that $\Omega$ admits a one-by-one exhaustion. For example, the combinatorial Laplacian of  the Cayley graph of $\mathbb Z\times\mathbb Z/2\mathbb Z$
(the direct product of the infinite cyclic group with the group of cardinality $2$) with respect to the generating set 
$
\left\{\left(1,\overline{0}\right);\left(0,\overline{1}\right)\right\}	
$
satisfies $\lambda$-uniqueness  on any finite subset because for any integer $n\geq 0$ the  subset $\Omega_n\subset G$ defined as
\[
	 \Omega_n=\{(k,\overline{x}): |k|\leq n, \overline{x}\in \mathbb Z/2\mathbb Z\}
\]
admits a one-by-one exhaustion. See Figure \ref{one} and  \ref{two}, which illustrate the case $n=1$. The dashed rectangle from Figure 
\ref{one} encloses the vertices of $\Omega_1$. The dashed polygon  from Figure \ref{two} encloses the vertices of $\Omega_1\setminus\{w_1\}$. The same subset $\Omega_1$ viewed in the Cayley graph of the same group but with respect to the generating set
\[
	\left\{\left(1,\overline{0}\right);\left(0,\overline{1}\right);\left(1,\overline{1}\right);\left(-1,\overline{1}\right)\right\}	
\]
admits no one-by-one exhaustion: Figure \ref{eigenfunction} shows an eigenfunction for the combinatorial Laplacian of this graph which takes exactly three values which are $-1,0,1$ and whose  support consists in the two vertices of $\Omega_0\subset\Omega_1$. The eigenvalue equals $6/5$. 

There are  elementary sufficient combinatorial conditions on a graph for the existence of eigenfunctions with finite support. Here is an example.
\begin{exam}
If a graph $\Gamma$ is obtained from a graph $H$ by adding to it a square formed  with four vertices $w,s,e,n$ and four edges $w-s$, $s-e$, $e-n$, $n-w$, and by gluing the vertex $e$ of the square, to a chosen vertex $u$ of the graph $H$, and by gluing the vertice $w$ of the square, to a chosen vertex $v$ of the graph $H$, then the function $\varphi$ which vanishes on all the vertices of $\Gamma$, except on $n$ and $s$ where it takes the values $\varphi(n)=1$ and $\varphi(s)=-1$, is an eigenfunction for the adjacency operator on $\Gamma$ of eigenvalue $0$.
\end{exam}
The existence of an height function, in the sense of Theorem \ref{theorem: exhaustion by level sets}, is a sufficient condition on a graph for the absence of eigenfunctions with finite support.

   The hypothesis of Theorem \ref{theorem: indicable}  imply the existence of an height function (see Theorem \ref{theorem: exhaustion by level sets}) which in turn implies the existence of a one-by-one exhaustion and  of the $\lambda$-unicity for any finite set of vertices and any $\lambda\in\mathbb R$. 

The second step in the proof of Theorem \ref{theorem: indicable} is an application of the localization principle for eigenfunctions: the existence of a  square summable $\lambda$-eigenfunction implies the existence of a  finitely supported $\lambda$-eigenfunction. A proof of this implication in our setting follows from the equivalence between conditions (4) and (5) in Theorem \ref{theorem: general principles}. The equivalence is proved with the help of the integrated density of states of $\Delta$ (see Subsection \ref{subsection: integrated density of states}) and a F{\o}lner sequence (see Subsection \ref{subsection: bound jumps}). The idea we follow
goes back to a short note published in 1984 by the physicists Delyon and Souillard \cite{DelSou}. See also the work of Bellissard: \cite[Proposition 4.1.4]{BelGap} and its proof and \cite[Question 2 page 116]{Bel} about the continuity of the integrated density of states, as well as Shubin's formula \cite[Appendix pages 146-148]{Bel} for computing the von Neumann trace with the help of usual traces and a F{\o}lner sequence.  Of course we do not claim any novelty about the equivalence of (4) and (5); our aim here is only to provide the reader with an easy and complete proof of the precise statement needed in our setting.  

The localization principle is a well-known fact studied and applied by many mathematicians and physicists in different contexts. 
It  seems  that  the  first  who  observed   the localization  principle  in  presence  of  a large  symmetry  was  Kuchment,  who  proved  it   in  the  case  of    lattices  
$\mathbb Z^d$, see \cite[Theorem 3]{Kuch} and the 1982 paper \cite[Theorem 12]{Kuch1982}. Later,  various  methods  were  applied  to   generalize  this  fact  to  larger  classes  of  groups,   eventually reaching  the  class  of  amenable  groups, that   perhaps  is  the largest  class of  groups  for  which  the  localization  principle  holds.  The  most  elegant  argument  given  in the  amenable  case    is  due  to  Elek  \cite[Proposition page 237]{Elek}  whose  note  is  based on  the  use of  von Neumann  dimension and  $L^2$-invariants  and  follows  the  ideas  of  Cheeger and Gromov  \cite[Lemma 3.1, equations 3.3 and 3.7]{CheeGrom}, Eckmann \cite[Theorem 1.2 and 3. $l_2$-cohomology, page 388]{Eck},   Dodziuk and  Matai  \cite[Theorem 0.1]{DodMat}  (a  lot  on  this  can  be  find  in Luck's  book \cite{Luck}).    The  localization  principle  for  amenable  groups  of  symmetries  is presented  in the  paper of Veselik \cite[Proposition 5.2]{Ves}  and  in  several  later  sources like for example in the paper of Higuchi and Nomura \cite[Theorem 3.2]{HiguchiNomura}.

\begin{figure}
    \begin{tikzpicture}
    \coordinate [label={above :$w_1$}] (1) at (2, 1);
    \coordinate [label={above :$v_1$}] (0) at (3, 1);
    
    \draw[dashed] (-3,1) -- (-2,1);
    \draw (-2,1) -- (4,1);
    \draw[dashed] (4,1) -- (5,1);
       
    \draw [dashed](-3,0) -- (-2,0);
    \draw (-2,0) -- (4,0);
    \draw[dashed] (4,0) -- (5,0);
    
    \draw (-2,0) -- (-2,1);
    \draw (-1,0) -- (-1,1);
    \draw (-0,0) -- (-0,1);
    \draw (1,0) -- (1,1);
    \draw (2,0) -- (2,1);
    \draw (3,0) -- (3,1);
    \draw (4,0) -- (4,1);
    
    \draw[dashed](-0.5,1.5)-- (2.5,1.5);
    \draw[dashed](-0.5,-0.5)-- (2.5,-0.5);
    \draw[dashed](-0.5,-0.5)-- (-0.5,1.5);
    \draw[dashed](2.5,-0.5)-- (2.5,1.5);
\end{tikzpicture}
    \caption{A one-by-one exhaustion (step 1).}
	\label{one}
\end{figure}
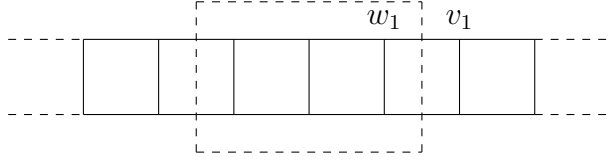

\begin{figure}
    \begin{tikzpicture}
    \coordinate [label={above :$w_1$}] (1) at (2, 1);
    \coordinate [label={above :$v_1$}] (0) at (3, 1);
    \coordinate [label={below :$w_2$}] (1) at (2, 0);
    \coordinate [label={below :$v_2$}] (0) at (3, 0);

    \draw[dashed] (-3,1) -- (-2,1);
    \draw (-2,1) -- (4,1);
    \draw[dashed] (4,1) -- (5,1);
       
    \draw [dashed](-3,0) -- (-2,0);
    \draw (-2,0) -- (4,0);
    \draw[dashed] (4,0) -- (5,0);
    
    \draw (-2,0) -- (-2,1);
    \draw (-1,0) -- (-1,1);
    \draw (-0,0) -- (-0,1);
    \draw (1,0) -- (1,1);
    \draw (2,0) -- (2,1);
    \draw (3,0) -- (3,1);
    \draw (4,0) -- (4,1);
    
    \draw[dashed](-0.5,1.5)-- (1.5,1.5);
    \draw[dashed](-0.5,-0.5)-- (2.5,-0.5);
    \draw[dashed](-0.5,-0.5)-- (-0.5,1.5);
    \draw[dashed](2.5,-0.5)-- (2.5,0.5);
    \draw[dashed](1.5,0.5)-- (2.5,0.5);
    \draw[dashed](1.5,0.5)-- (1.5,1.5);
\end{tikzpicture}
    \caption{A one-by-one exhaustion (step 2).}
	\label{two}
\end{figure}

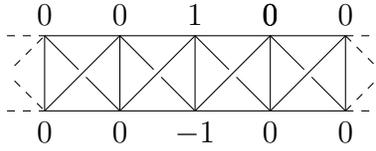
\begin{figure}
    \begin{tikzpicture}
    
    \draw [dashed] (-1.4,0.6) -- (-1,1);
    \draw [dashed] (-1.4,0.4) -- (-1,0);
    \draw (-1,0) -- (0,1);
    \draw (-1,1) -- (-0.55,0.55);
    \draw (-0.45,0.45) -- (0,0);
    \draw (0,0) -- (1,1);
    \draw (0,1) -- (0.45,0.55);
    \draw (0.55,0.45) -- (1,0);
    \draw (1,0) -- (2,1);
    \draw (1,1) -- (1.45,0.55);
    \draw (1.55,0.45) -- (2,0);
    \draw (2,0) -- (3,1);
    \draw (2,1) -- (2.45,0.55);
    \draw (2.55,0.45) -- (3,0);
    \draw [dashed] (3,0) -- (3.5,0);
    \draw [dashed] (3,0) -- (3.4,0.4);
    \draw [dashed] (3,1) -- (3.4,0.6);
    \draw [dashed] (3,1) -- (3.5,1);
    \draw[dashed] (-1.5,1) -- (-1,1);
    \draw (-1,1) -- (3,1);
       
    \draw [dashed](-1.5,0) -- (-1,0);
    \draw (-1,0) -- (3,0);

   \coordinate [label={above :$0$}] (1) at (-1, 1);
   \coordinate [label={above :$0$}] (1) at (0, 1);
   \coordinate [label={above :$1$}] (1) at (1, 1);
   \coordinate [label={below :$-1$}] (1) at (1, 0);
   \coordinate [label={below :$0$}] (1) at (2, 0);
   \coordinate [label={below :$0$}] (1) at (3, 0);
   \coordinate [label={above :$0$}] (1) at (2, 1);
   \coordinate [label={above :$0$}] (1) at (2, 1);
   \coordinate [label={below :$0$}] (1) at (0, 0);
   \coordinate [label={below :$0$}] (1) at (-1, 0);

    \coordinate [label={above :$0$}] (1) at (2, 1);
    \coordinate [label={above :$0$}] (0) at (3, 1);
    
    \draw (-1,0) -- (-1,1);
    \draw (-0,0) -- (-0,1);
    \draw (1,0) -- (1,1);
    \draw (2,0) -- (2,1);
    \draw (3,0) -- (3,1);

    \end{tikzpicture}
    \caption{An eigenfunction with finite support.}
	\label{eigenfunction}
\end{figure}

\subsection{The integrated density of states of $\Delta$}\label{subsection: integrated density of states}
We consider the spectral resolution $(E_{\lambda})_{\lambda\in\mathbb R}$ of the Laplace operator $\Delta$ of a weighted graph $\Gamma$ (see Subsection  \ref{subsection: spectral projections}). 
In the case $\Gamma$ admits a group of automorphisms with finitely many orbits of vertices 
(see Subsection \ref{subsection: the von Neumann trace}) - no other hypothesis about the action is needed - we may use  the von Neumann trace $\tau_1$ (see Definition \ref{definition: von Neumann trace}) to define the integrated density of states of $\Delta$ 
\[
	\mathbb R\to[0,1]
\]
\[
	\lambda\mapsto N(\lambda)=\tau_1(E_{\lambda}).
\]
The function $N$ is a non-decreasing right-continuous function which has a jump at $\lambda$ if and only if $\lambda$ is an eigenvalue of $\Delta$ (see Subsection \ref{subsection: spectral projections}). Lebesgue's theorem for the differentiability of monotone functions \cite[Ch. 1, no. 2]{RieszNagy} implies that $N$ is a.e. differentiable and  the Darboux-Froda's theorem implies that its set of points of discontinuity is at most countable.
The exact computation of $N$ is  probably hopeless for most graphs. The easiest case is when $\Gamma$ is the Cayley graph  of the infinite cyclic group $\mathbb Z$ with respect to the generating set $S=\{1\}$. In this case, the integrated density of states of the combinatorial Laplacian is expressed with the help of the level sets of the function
\[
	g:[-\pi,\pi]\to[0,2]
\]
\[
	g(\theta)=1-\cos\theta.
\]
More precisely, for each $\lambda\in\mathbb R$, we write
\[
	\{g\leq\lambda\}=\{\theta: g(\theta)\leq\lambda\}
\]
and denote ${\bf 1}_{\{g\leq\lambda\}}$ the characteristic function of this set.
One finds:
\begin{equation}\label{equation: integrated density of states}
\tau_1(E_{\lambda})=\frac{1}{2\pi}\int_{-\pi}^{\pi}{\bf 1}_{\{g\leq\lambda\}}(\theta)d\theta.
\end{equation}
This function is obviously analytic in $\lambda$. Analogous expressions are well-known in any dimension (i.e. on $\mathbb Z^d$, the free abelian group of rank $d\in\mathbb N$), see \cite[Example 1.4, Example 2.6, Formula (2.47), Example 9.6, Example 9.7]{Luck}.  

What about non-abelian cases? 
According to Corollary \ref{cor: lamplighter} above, the integrated density of states of the combinatorial Laplacian of the Cayley graph of the lamplighter group with respect to the generating set $\{a;c\}$ is continuous, whereas according to \cite{GriZuk}, if we consider the generating set $\{a;b\}$ instead, then the corresponding integrated density of states  restricted to $[0,2]$ has as a dense set of points of discontinuity. Even though this example shows that the regularity of the  integrated density of states is sensitive to the choice of the generating set, it is known \cite[Theorem 1.1]{BenPitSau} that its asymptotic behavior  near $0$ (more precisely its dilatational equivalence class near zero) is an invariant of the group (and more generally of its quasi-isometry class). In the case of the lamplighter group the dilatational equivalence class of $N(\lambda)$ is represented by the function
\[
	\lambda\mapsto e^{-\frac{1}{\sqrt\lambda}}.
\] 
A general formula \cite[Theorem 1.2]{BenPitSau} relating the asymptotic behavior of $N$ near zero to the asymptotic behavior of the $l^2$-isoperimetric profile near infinity brings estimates of $N$ for several families of finitely generated amenable groups \cite[1.7 Explicit computations]{BenPitSau}. (Good estimates for the $l^2$-isoperimetric profile can be obtained with the help of optimal F{\o}lner sequences.)  It follows from the technics explained in \cite{BenPitSau} that for any group $G$, the zero-dimensional Novikov-Schubin invariant (as defined in \cite[Definition 1.8]{LotLuc}) of a positive self-adjoint element $\Delta$ of the group algebra 
$\mathbb R[G]$ of the form $\Delta=e-\sum_ga_gg$, where $e\in G$ is the neutral element, with $a_g=a_{g^{-1}}\geq 0$ and $\sum_ga_g=1$ is bounded bellow by $1/2$ (or, by convention, is equal to the symbol $\infty^+$ if the support of $\sum_ga_gg$ generates a finite group). The lowest possible value $1/2$ is reached
by the combinatorial Laplacian of the Cayley graph of the infinite cyclic group $\mathbb Z$ with respect to the generating set $\{1\}$. The value $1/2$ characterizes (finite extensions of) the infinite cyclic group among finitely generated groups. See \cite[Theorem 1.2]{BenPitSau}.

\subsection{Large-scale geometry to bound jumps in the IDS}\label{subsection: bound jumps}
The Borel functional calculus applied to the Laplace operator $\Delta$ of a connected weighted graph $\Gamma$ associates to any $\lambda\in\mathbb R$ the orthogonal projection $E_{\{\lambda\}}$ onto the subspace of $\lambda$-eigenfunctions
(see Subsection \ref{subsection: spectral projections}).
In the case $\Gamma$ admits a cocompact group $G$ of automorphisms, we may use the von Neumann trace $\tau_1(E_{\{\lambda\}})$ to measure the von Neumann dimension of the space of $\lambda$-eigenfunctions (see Proposition \ref{proposition: properties of the von Neumann trace}). Whenever we use a von Neumann trace, in particular whenever we consider an integrated density of states, it is implicitly assumed that $\Gamma$ admits a cocompact group of automorphisms.  The projection $E_{\{\lambda\}}$ vanishes  if and only if the integrated density of states $N$ of $\Delta$ is continuous at $\lambda$ (see Subsection \ref{subsection: spectral projections}). If $\Delta$ satisfies $\lambda$-uniqueness on a finite subset $\Omega$ of the set $V$ of vertices of $\Gamma$, and if $\Omega$ is a disjoint union of fundamental domains for the action of $G$ on $V$, then
\[
\tau_1(E_{\{\lambda\}})\leq\frac{|\partial_2\Omega|}{|\Omega|},	
\]
where $|\partial_2\Omega|$ is the cardinality of the $2$-boundary of $\Omega$ (that is the set of vertices of $\Omega$ at distances less or equal to $2$ from $V\setminus\Omega$, see Definition \ref{definition: thick boundary}), and where $|\Omega|$ is the cardinality of $\Omega$ (see Theorem \ref{theorem: boundaries to bound von Neuman traces} below for the general case of a finite set $\Omega$, i.e. which is not necessarily partitioned by fundamental domains). A generalization of the above inequality is true in the setting of
Theorem \ref{theorem: boundaries to bound von Neuman traces}. When $\Gamma$ has a F{\o}lner sequence (see Definition \ref{definition: amenable graph}) and each set of the F{\o}lner sequence admits a one-by-one exhaustion, then the above inequality is a tool to prove the vanishing of all the 
projections $E_{\{\lambda\}}$, $\lambda\in\mathbb R$, or, in other words, a tool to prove the continuity of $N$. If there is no F{\o}lner sequence, the inequality  still provides an upper bound (always a bad one?) for the von Neumann dimensions of the spaces of eigenfunctions. In the presence of a F{\o}lner sequence, the inequality leads to a proof of the localization principle: the existence of a  $\lambda$-eigenfunction implies that the above inequality fails for at least one set, say $\Omega$, from the F{\o}lner sequence. Hence $\lambda$-unicity on $\Omega$ has to fail. It means that there is a   $\lambda$-eigenfunction whose support is included in $\Omega$. Theorem \ref{theorem: general principles} formalizes these ideas in a general setting (no hypothesis on the structure of the involved groups are needed; neither on the acting group  nor on the stabilizer subgroups). It generalizes
\cite[Theorem 3.2]{HiguchiNomura}. Similar ideas involving random Schr\"odinger operators are presented in \cite{Ves} and references therein. 

\subsection{Perspectives and questions}
Apply one-by-one exhaustions to show continuity of the integrated density of spaces for more operators, e.g. random Schr\"odinger operators.
In the presence of a one-by-one exhaustion, when is the spectral measure
absolutely continuous, when is it singular continuous? 
Compute von Neumann dimensions $\tau_1(E_{\{\lambda\}})$ for family of graphs, e.g.  planar Cayley graphs of crystallographic groups, Cayley graphs of $F_r\times\mathbb Z/2\mathbb Z$, the direct product of the (non-abelian if $r\geq 2$) free group of rank $r$ with the group of cardinality $2$, relative  to generating sets of the kind of the one involved in Figure \ref{eigenfunction}. 

\subsection{Acknowledgments}
We are grateful to Cosmas Kravaris for helping us to gain a better understanding of $\lambda$-uniqueness through examples. Rostislav Grigorchuk is grateful to Jean Bellissard and Peter Kuchment for numerous discussions about periodic graphs and integrated densities of states. Christophe Pittet is grateful to Jean Bellissard,  Alexander Bendikov and Roman Sauer for sharing their knowledge about integrated densities of states, spectral measures and $l^2$-invariants. Both authors are very grateful to Pierre de la Harpe for numerous corrections and suggestions improving the quality of the exposition. 

\section{Schr\"odinger operators on graphs}

\subsection{The path-metric on a connected graph and Cayley graphs}\label{subsection: the path-metric on a connected graph}
Recall that according to Serre \cite[2.1]{Ser},  
a \emph{graph} $\Gamma=(V,E,o,t,\iota)$ consists in: a set $V$ of \emph{vertices}, a set $E$ of \emph{oriented edges}, a map
		\[
			E\to V\times V
		\]
		\[
			e\mapsto (o(e),t(e)),
		\]
defining the \emph{origin}	$o(e)$ and the	\emph{terminus} $t(e)$ of the oriented edge $e$, and finally, a fixed point free involution $\iota$ defined on the set $E$,
with compatibility conditions:	
\[
o(\iota(e))=t(e),\, t(\iota(e))=o(e),\,\forall e\in E.	
\]
The involution ``flips the orientation of each oriented edge'' and is usually written as $\iota(e)=\overline{e}$.
(Notice that the above definition allows loops, multiple edges, and does not imply local finiteness.)

Let $x$ and $y$ be two vertices of $\Gamma$. A \emph{path of $\Gamma$ of finite length $n\in \mathbb N\cup\{0\}$ with origin $x$ and terminus $y$} is a sequence of vertices $v_0,\dots,v_n$
of $\Gamma$, such that $v_0=x$ and $v_n=y$, together with a sequence of $n$ edges
	$e_k,\, 1\leq k\leq n$
of $\Gamma$, such that $v_{k-1}=o(e_{k}), v_{k}=t(e_{k}),\,\forall 1\leq k\leq n.$
The graph $\Gamma$ is \emph{connected} if any two vertices of $\Gamma$ are the origin and terminus of a path of $\Gamma$ of finite length.
If $\Gamma$ is connected, the function $d:V\times V\to  \mathbb N\cup\{0\}$, 
$(x,y)\mapsto d(x,y),$
defined as the minimum  of the lengths of  the paths of $\Gamma$ with origin $x$ and terminus $y$, is a distance on $V$, also called the \emph{path-metric}  associated to $\Gamma$.

Let $G$ be a group and let $S\subset G$ be a generating set of $G$ (i.e. the only subgroup of $G$ containing $S$ is $G$ itself). The \emph{Cayley graph} 
$\Gamma=\mathcal C(G,S)$
 of $G$ relative to (the non-necessary symmetric) subset $S$ is defined as the graph with vertex set $V=G$, with edge set the disjoint union $$E=\left(G\times S\right)\bigsqcup \overline{\left(G\times S\right)},$$ 
and origin and terminus maps
$$(o((g,s)),t((g,s)))=(g,gs)$$ and hence $$\left(o\left(\overline{(g,s)}\right),t\left(\overline{(g,s)}\right)\right)=(gs,g).$$
Notice that $\mathcal C(G,S)$ is connected because $S$ is a generating set (symmetric or not). 
Let $\Gamma$ be a connected graph with path-metric $d$ defined on its vertex set $V$. For any $x\in V$ and $r\geq 0$, we define the \emph{closed ball of radius $r$ with center $x$} as
	\[
		B(x,r)=\{y\in V: d(x,y)\leq r\}.
	\]

\subsection{The Hilbert spaces associated to a weighted graph}\label{subsection: the Hilbert spaces associated to a weighted graph}
We consider  a strictly positive weight on the vertex set  $m_V: V\to]0,\infty[,$
and the associated Hilbert space
\[
	l^2(V,m_V)=\{\varphi:V\to\mathbb C: \sum_x|\varphi(x)|^2m_V(x)<\infty\}
\]
of square summable functions on $V$. The hermitian product of  $\varphi,\psi\in l^2(V,m_V)$ is:
\[
	\langle\varphi,\psi\rangle =\sum_x\varphi(x)\overline{\psi(x)}m_V(x),
\]
and $\|\varphi\|_2=\sqrt{\langle\varphi,\varphi\rangle}$ 
denotes the $l^2$-norm of $\varphi$.
We consider a strictly positive  weight on the oriented edge set  
$m_E: E\to]0,\infty[,$
which is \emph{symmetric} in the sense that
$m_E(e)=m_E(\overline{e}),\,\forall e\in E.$
A function $\alpha:E\to\mathbb C$ is \emph{anti-symmetric} if $\alpha(\overline{e})=-\alpha(e), \,\forall e\in E$.
Let 
\[
	l^2(E,m_E)=\{\alpha:E\to\mathbb C: \forall e\in E, \alpha(\overline{e})=-\alpha(e), \, \sum_{e\in E}|\alpha(e)|^2m(e)<\infty\}
\]
denote the  Hilbert space of anti-symmetric square summable functions on $E$. (We may thing of $l^2(E,m_E)$ as a space of $1$-forms on $\Gamma$.)
By definition, the hermitian product of $\alpha,\beta\in l^2(E,m_E)$ is:
\[
	\langle\alpha,\beta\rangle =\frac{1}{2}\sum_e\alpha(e)\overline{\beta(e)}m_E(e).
\]
The  $l^2$-norm of $\alpha\in l^2(E,m_E)$ is 
$\|\alpha\|_2=\sqrt{\langle\alpha,\alpha\rangle}.$ 

\subsection{The Laplace operator of a weighted graph}\label{Laplace}
For each vertex $x\in V$, we denote 
\[
	E_x=\{e\in E: o(e)=x\}
\]
the set of edges of $\Gamma$ whose origin is $x$.
\begin{defi}(The Sunada-Sy  necessary and sufficient condition for the boundedness of the Laplace operator \cite{SunSy}.)\label{definition: SuSy}
	 We say that a weighted graph satisfies the \emph{Sunada-Sy condition} if
\begin{equation*}\label{equation: bounded}
	\sup_{x\in V}\frac{1}{m_V(x)}\sum_{e\in E_x}m_E(e)<\infty.
\end{equation*}
\end{defi}

In this work, when we consider a weighted graph, we  always assume it satisfies the Sunada-Sy condition. Notice that the Sunada-Sy condition implies that $E_x$ is at most countable for any vertex $x$.
Let $f$ be a function on $V$. Assuming  the  Sunada-Sy condition, it is easy to check that  the formulae
\[
	d\varphi(e)=\varphi(t(e))-\varphi(o(e)),\,\forall e\in E
\]
define a bounded operator $d:l^2(V,m_V)\to l^2(E,m_E).$
Hence its adjoint $d^*:l^2(E,m_E)\to l^2(V,m_V),$
is also bounded. We conclude that the composition $d^*d:l^2(V,m_V)\to l^2(V,m_V),$
is bounded, self-adjoint, and positive. 
The \emph{Laplace operator} $\Delta$ associated to the weighted graph $(\Gamma,m_V,m_E)$
is defined as $\Delta=d^*d$.
The Sunada-Sy condition defined above is equivalent to the boundedness of $\Delta$ (see \cite{SunSy} for a proof). We will use the following well-known formulae for $\Delta$.

\begin{prop}\label{proposition: formula for the Laplacian}(The key formulae.) Let $\alpha\in l^2(E,m_E)$ and $\varphi\in l^2(V,m_V)$. Let $x\in V$.
\begin{enumerate}
	\item 
	\[
		d^*\alpha(x)=-\frac{1}{m_V(x)}\sum_{e\in E_x}\alpha(e)m_E(e).
	\]
	\item 
	\[
	\Delta \varphi(x)=\left(\frac{1}{m_V(x)}\sum_{e\in E_x}m_E(e)\right)\varphi(x)-\frac{1}{m_V(x)}\sum_{e\in E_x}\varphi(t(e))m_E(e).	
	\]
\end{enumerate}
\end{prop}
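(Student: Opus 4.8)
The plan is to prove (1) directly from the characterization of the adjoint, namely the identity $\langle d\varphi,\alpha\rangle=\langle\varphi,d^*\alpha\rangle$ valid for all $\varphi\in l^2(V,m_V)$ and $\alpha\in l^2(E,m_E)$, and then to obtain (2) by substituting $\alpha=d\varphi$ into (1). First I would expand the left-hand side, using the definition of the hermitian product on $l^2(E,m_E)$ and of $d$, as
\[
\langle d\varphi,\alpha\rangle=\frac{1}{2}\sum_{e\in E}\bigl(\varphi(t(e))-\varphi(o(e))\bigr)\overline{\alpha(e)}\,m_E(e).
\]
The goal is to reorganize this as a sum indexed by vertices, so that the coefficient of $\varphi(x)$ can be read off.

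The key step is to exploit the involution. Performing the change of variable $e\mapsto\overline{e}$ in the terminus term and using $t(\overline{e})=o(e)$, $m_E(\overline{e})=m_E(e)$, together with the anti-symmetry $\alpha(\overline{e})=-\alpha(e)$, I would show that the terminus term equals minus the origin term; consequently the two contributions coincide and the factor $1/2$ disappears, leaving
\[
\langle d\varphi,\alpha\rangle=-\sum_{e\in E}\varphi(o(e))\overline{\alpha(e)}\,m_E(e)=-\sum_{x\in V}\varphi(x)\sum_{e\in E_x}\overline{\alpha(e)}\,m_E(e),
\]
where the last equality uses the partition $E=\bigsqcup_{x\in V}E_x$ coming from the fact that every oriented edge has a unique origin. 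Comparing this with $\langle\varphi,d^*\alpha\rangle=\sum_x\varphi(x)\overline{d^*\alpha(x)}\,m_V(x)$ and testing against the indicator functions $\varphi=\delta_y$ (each of which lies in $l^2(V,m_V)$), I would conclude that $\overline{d^*\alpha(x)}\,m_V(x)=-\sum_{e\in E_x}\overline{\alpha(e)}\,m_E(e)$; taking conjugates and recalling that $m_V$ and $m_E$ are real-valued yields formula (1). Formula (2) is then immediate: applying (1) to $\alpha=d\varphi$ and using $o(e)=x$ for $e\in E_x$, so that $d\varphi(e)=\varphi(t(e))-\varphi(x)$, and separating the two resulting sums produces exactly the stated expression.

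The one point demanding care — and the main obstacle — is the legitimacy of the rearrangements of infinite sums, in particular the interchange of the summation over $E$ with the regrouping over vertices and the change of variable $e\mapsto\overline{e}$. These are justified by absolute convergence: for each $x$, Cauchy--Schwarz gives $\sum_{e\in E_x}|\alpha(e)|\,m_E(e)\le\bigl(\sum_{e\in E_x}|\alpha(e)|^2 m_E(e)\bigr)^{1/2}\bigl(\sum_{e\in E_x}m_E(e)\bigr)^{1/2}$, and the Sunada--Sy condition bounds the second factor by $\sqrt{C\,m_V(x)}$, so each inner sum converges and $d^*\alpha$ is pointwise well defined. A clean way to avoid any convergence subtlety altogether is to first establish the adjoint identity for $\varphi$ finitely supported, where every sum above is finite and all manipulations are trivially valid, and then to invoke the density of finitely supported functions in $l^2(V,m_V)$ together with the boundedness of $d$ (and hence of $d^*$) guaranteed by the Sunada--Sy condition to identify $d^*\alpha$.
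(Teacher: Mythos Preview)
Your argument is correct and follows essentially the same route as the paper: expand the pairing, use the involution together with the anti-symmetry of $\alpha$ and the symmetry of $m_E$ to merge the terminus and origin contributions, and read off the value at $x$. The only organizational difference is that the paper plugs $\varphi=\delta_x$ into $\langle\alpha,d\varphi\rangle$ from the very first line; since $d\delta_x$ is supported on edges incident to $x$, every sum that appears is over $E_x$ and the convergence issues you carefully address simply never arise.
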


\subsection{Adjacency and  Markov operators, the combinatorial Laplacian}\label{subsection: adjacency and  Markov operators}
As explained in \cite{HiguchiNomura}, several familiar operators on $\Gamma$ may be seen as different avatars of the general Laplace
operator defined above.  
In the case $\sup_x|E_x|<\infty$, choosing $m_V$ and $m_E$ identically equal to $1$, we obtain
$\Delta=D-A$
where, for any $\varphi\in l^2(V,m_V)$ and $x\in V$, 
 $D\varphi(x)=|E_x|\varphi(x)$ 
is the diagonal multiplication operator by the degree at the vertex $x$ (i.e. the number of oriented edges with origin $x$) of $\Gamma$, and 
$$A\varphi(x)=\sum_{e\in E_x}\varphi(t(e))$$
is the \emph{adjacency operator}.
Suppose we are given a $w$-reversible random walk $p$ on $\Gamma$: that is a function
$p:E\to[0,1]$ satisfying 
$$\sum_{e\in E_x}p(e)=1,\, \forall x\in V,$$
	and a strictly positive function 
$w:V\to]0,\infty[$
such that 
\[
		w(o(e))p(e)=w(o(\overline{e}))p(\overline{e}), \forall e\in E.
\]
Choosing $m_V=w$ and defining the following weight on the edges:
	\[
		m_E(e)=w(o(e))p(e),\,\forall e\in E
	\]
we obtain $\Delta=I-M,$
where $I$ is the identity operator and $M$ is the \emph{Markov operator associated to $p$}, i.e. if $\varphi\in l^2(V,m_V)$ and $x\in V$,
	\[
		M\varphi(x)=\sum_{e\in E_x}\varphi(t(e))p(e).
	\]	
A special case of the previous setting is when $p$ is the \emph{simple random walk}, i.e.
	\[
		p(e)=\frac{1}{|E_{o(e)}|},\,\forall e\in E,
	\]
	and 
$w(x)=c|E_x|,\,\forall x\in V,$
where $c>0$ is any chosen constant.
For $\varphi\in l^2(V,m_V)$ and $x\in V$, we obtain the \emph{combinatorial Laplacian}
	\[
		\Delta \varphi(x)=\varphi(x)-\frac{1}{|E_x|}\sum_{e\in E_x}\varphi(t(e)).
	\]
In the case the degree $E_x$ does not depend on $x$, we may choose the constant $c=|{E_x}|^{-1}$ so that the weight $m_V=w$ is constant equal to $1$.

\subsection{Schr\"odinger operators}
\begin{defi}\label{definition: Schrodinger operator}(Schr\"odinger operator.) Let  $\Gamma$  be a  weighted graph satisfying the Sunada-Sy condition. Let $\Delta$ be its associated Laplacian.
Let $q:V\to \mathbb R$ be a bounded function (which will be called a \emph{potential}) on the vertex set $V$ of $\Gamma$:
\[
	\sup_{x\in V}|q(x)|<\infty.
\]
The \emph{Schr\"odinger operator} $H$ associated to $\Gamma$ and the \emph{potential} $q$, is the bounded self-adjoint operator
\[
	H=\Delta+q:l^2(V,m_V)\to l^2(V,m_V),
\]
i.e. if $\varphi\in l^2(V,m_V)$ and $x\in V$, then
\[
	H\varphi(x)=\Delta \varphi(x)+q(x)\varphi(x).
\]
\end{defi}
When working with an eigenfunction $\varphi$ of a Schr\"odinger operator $H$, it will be  convenient for us to apply an ``associated adjacency operator" $L$  to $\varphi$ and to control the result $L\varphi$.  The aim of the next proposition is to make this idea precise.
\begin{prop}\label{proposition: Schrodinger and Markov}(From Schr\"odinger to adjacency and back.)
Let  $\Gamma$  be a  weighted graph satisfying the Sunada-Sy condition. Let $V$ be the  vertex set of $\Gamma$. Let $q$ be a real bounded potential on $V$ and let $H$ be the corresponding Schr\"odinger operator on $l^2(V,m_V)$. Let $L$ be the bounded operator on $l^2(V,m_V)$, defined on each $\varphi\in l^2(V,m_V)$ and $x\in V$ as:
		\[
			L\varphi(x)=\frac{1}{m_V(x)}\sum_{e\in E_x}\varphi(t(e))m_E(e).
		\]
		For each real $\lambda$, consider the bounded potential $p_{\lambda}$ on $V$, defined on each $x\in V$ as:
		\[
			p_{\lambda}(x)=q(x)-\lambda+\frac{1}{m(x)}\sum_{e\in E_x}m_E(e).
		\]
	Let $\varphi\in l^2(V,m_V)$.  Then $H\varphi=\lambda\varphi,$
		if and only if $L\varphi=p_{\lambda}\varphi$.
		
\end{prop}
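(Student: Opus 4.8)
The plan is to reduce everything to the second key formula of Proposition \ref{proposition: formula for the Laplacian}, which already separates $\Delta$ into a diagonal part and an ``off-diagonal'' hopping part. First I would write out, for each vertex $x\in V$,
\[
\Delta\varphi(x)=\left(\frac{1}{m_V(x)}\sum_{e\in E_x}m_E(e)\right)\varphi(x)-L\varphi(x),
\]
simply recognizing that the subtracted term in the formula of Proposition \ref{proposition: formula for the Laplacian} is exactly $L\varphi(x)$ and that the coefficient of $\varphi(x)$ is the weighted degree at $x$.

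Next I would add the potential and collect terms. Since $H\varphi(x)=\Delta\varphi(x)+q(x)\varphi(x)$, substituting the above gives
\[
H\varphi(x)=\left(q(x)+\frac{1}{m_V(x)}\sum_{e\in E_x}m_E(e)\right)\varphi(x)-L\varphi(x).
\]
The equation $H\varphi=\lambda\varphi$ holds in $l^2(V,m_V)$ if and only if it holds pointwise at every vertex, and rearranging the pointwise identity
\[
\left(q(x)+\frac{1}{m_V(x)}\sum_{e\in E_x}m_E(e)\right)\varphi(x)-L\varphi(x)=\lambda\varphi(x)
\]
isolates $L\varphi(x)=p_{\lambda}(x)\varphi(x)$, where $p_{\lambda}(x)=q(x)-\lambda+\frac{1}{m_V(x)}\sum_{e\in E_x}m_E(e)$ is precisely the potential from the statement. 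Both implications come from the same algebraic rearrangement, so the asserted equivalence follows at once.

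The computation is routine; the only points deserving attention are questions of well-definedness, and these are where I would be slightly careful. I would note that the Sunada-Sy condition bounds the weighted degree $x\mapsto\frac{1}{m_V(x)}\sum_{e\in E_x}m_E(e)$, so $L$ equals multiplication by the weighted degree minus $\Delta$ and is therefore a bounded operator on $l^2(V,m_V)$, while $p_{\lambda}$ is a bounded potential; this guarantees that both $L\varphi$ and $p_{\lambda}\varphi$ lie in $l^2(V,m_V)$, so that passing between the $l^2$-equality and the pointwise equality is legitimate. There is no genuine obstacle here: the content of the proposition is the bookkeeping observation that the hopping part of the Schr\"odinger operator is the adjacency-type operator $L$, with the diagonal part of $H$ absorbed into the shifted potential $p_{\lambda}$.
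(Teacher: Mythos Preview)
Your proof is correct and follows essentially the same route as the paper: both reduce to the operator identity $H-\lambda=p_{\lambda}-L$, obtained by recognizing from Proposition~\ref{proposition: formula for the Laplacian} that $\Delta$ equals multiplication by the weighted degree minus $L$. Your added remark on boundedness of $L$ and $p_{\lambda}$ is a welcome bit of care that the paper leaves implicit.
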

\begin{proof} We have the following equality between operators: $$H-\lambda=p_{\lambda}-L.$$
Hence $\mbox{Ker}(H-\lambda)=\mbox{Ker}(L-p_{\lambda})$.
\end{proof}

\section{Boundary conditions}
\subsection{Uniqueness of eigenfunctions}\label{subsection: Uniqueness of eigenfunctions}
\begin{defi}($\lambda$-uniqueness for $H$ on $\Omega$.)\label{definition: uniqueness}  Let  $\Gamma$ be a weighted graph satisfying the Sunada-Sy condition. Let $\Omega\subset V$ be a subset of vertices of $\Gamma$.
Let $H$ be a Schr\"odinger operator on $\Gamma$ and $\lambda$ be a real number. The operator $H$ satisfies \emph{$\lambda$-uniqueness on $\Omega$} if the only there is no $\lambda$-eigenfunction of $H$ which vanishes outside of $\Omega$. Formally: if $\varphi\in  l^2(V,m_V)$ is such that 
\[
	 H\varphi=\lambda\varphi,
\]
and if  $\varphi(x)=0$ for all $x\in V\setminus\Omega$, then $\varphi(x)=0$ for all $x\in V$.
\end{defi}
Notice that if $\Omega'\subset\Omega$ and if $H$ satisfies $\lambda$-uniqueness  on $\Omega$ then $H$ satisfies $\lambda$-uniqueness  on $\Omega'$. On the empty set, $H$ satisfies $\lambda$-uniqueness for any $\lambda$. 

Let $m_{\Omega}$ be the restriction of $m_V$ to $\Omega$. Let 
\[
l^2(\Omega,m_{\Omega})=\{\varphi:\Omega\to\mathbb C: \sum_{x\in\Omega}|\varphi(x)|^2m_{\Omega}(x)<\infty\}	
\]
be the Hilbert space of square summable functions on $\Omega$.
The inclusion $\Omega\subset V$ induces a natural linear isometric embedding
\[
	U_{\Omega}:l^2(\Omega,m_{\Omega})\to l^2(V,m_V)
\]
which is defined as ``the extension by zero outside of $\Omega$'', more formally: if $\varphi\in l^2(\Omega,m_{\Omega})$, then $U_{\Omega}\varphi(x)=\varphi(x)$ in the case 
$x\in\Omega$, and $U_{\Omega}\varphi(x)=0$ in the case 
$x\in V\setminus\Omega$. Let 
\[
	U_{\Omega}^*:l^2(V,m_V)\to l^2(\Omega,m_{\Omega})
\]
be the adjoint of $U_{\Omega}$. Using $\delta$-functions (i.e. characteristic functions of singletons)
for $x\in V$ and  $\omega\in\Omega$  we have:
\[
	\langle U^*_{\Omega}\delta_x,\delta_\omega|_{\Omega}\rangle_{l^2(\Omega,m_{\Omega})}=\langle\delta_x,U_{\Omega}\delta_\omega|_{\Omega}\rangle_{l^2(V,m_V)}.
\] 
Hence:
\begin{equation}\label{equation: U^*=0} 
x\in V\setminus\Omega \implies U^*_{\Omega}\delta_x=0,
\end{equation}
\begin{equation}\label{equation: U^*=1} 
x\in\Omega \implies U^*_{\Omega}\delta_x=\delta_x|_{\Omega}.
\end{equation}
Notice that the composition $P_{\Omega}=U_{\Omega}U_{\Omega}^*$ is the orthonormal projection onto the subspace
$U_{\Omega}(l^2(\Omega,m_{\Omega}))\subset l^2(V,m_V)$. In other words, for any $\varphi\in l^2(V,m_V)$, we have:
\[
	P_{\Omega}\varphi=\sum_{x\in\Omega}\varphi(x)\delta_x.
\]
\begin{defi}\label{definition: thick boundary}(Thick boundary.) Let $(X,d)$ be a metric space. For $E\subset X$ and  
	$x\in X$, we write
	\[
		d(x,E)=\inf_{y\in E}d(x,y).
	\]
For $r\geq 0$, the $r$-\emph{boundary} of a subset $\Omega\subset X$ is the subset of $X$ defined as:
	\[
		\partial_r\Omega=\{x\in \Omega: d(x,X\setminus\Omega)\leq r\}.
	\]
In other words $\partial_r\Omega$ consists in points  of $X$ lying inside $\Omega$ at depth less or equal to $r$. 
\end{defi}
This definition will mainly be applied to the vertex set  of a connected graph with its path-metric. In this case, all distances take integral values. The case $r=2$ is relevant for  uniqueness properties in the Dirichlet problem: roughly speaking we try to control $\lambda$-eigenfunctions on a domain $\Omega$ with the help of a condition on $\partial_2\Omega$. The following technical lemma will be useful. 

\begin{lemm}\label{lemma: gluing} (Cutting and pasting eigenfunctions.) Let $\Gamma$ be a weighted graph  with vertex set $V$. Assume $\Gamma$ is connected and satisfies the Sunada-Sy condition. Let $H=\Delta+q$ be a Schr\"odinger operator on $\Gamma$ whose potential $q$ is bounded.
Consider $\Omega\subset V$ and $\lambda\in\mathbb R$ and suppose $\varphi\in l^2(V,m_V)$ is a $\lambda$-eigenfunction of $H$:
\[
	H\varphi=\lambda\varphi.
\]
Assume $\varphi$ vanishes on $\partial_2\Omega$. Then:
\begin{enumerate}
	\item the function $P_{\Omega}\varphi\in l^2(V,m_V)$ either vanishes everywhere (i.e. is the zero function on $V$), or is also a $\lambda$-eigenfunction of $H$,
    \item and in the case $H$ satisfies $\lambda$-uniqueness on $\Omega$, the function $P_{\Omega}\varphi$ vanishes everywhere.
\end{enumerate} 
\end{lemm}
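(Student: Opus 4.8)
The plan is to show that $\psi:=P_\Omega\varphi$ satisfies $H\psi=\lambda\psi$; assertion (1) then follows at once (either $\psi$ is the zero function or it is a $\lambda$-eigenfunction), and assertion (2) is immediate, since $\psi$ vanishes outside $\Omega$, so $\lambda$-uniqueness on $\Omega$ (Definition \ref{definition: uniqueness}) forces $\psi\equiv 0$. Being the orthogonal projection of $\varphi\in l^2(V,m_V)$, the function $\psi$ automatically lies in $l^2(V,m_V)$, and by the formula for $P_\Omega$ it agrees with $\varphi$ on $\Omega$ and vanishes on $V\setminus\Omega$. Rather than compute $\Delta\psi$ directly, I would invoke Proposition \ref{proposition: Schrodinger and Markov}: writing $L$ and $p_\lambda$ for the operator and potential defined there, the identity $H\psi=\lambda\psi$ is equivalent to $L\psi=p_\lambda\psi$. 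The decisive feature of $L$ is its locality: $L\psi(x)$ depends only on the values of $\psi$ at the endpoints $t(e)$, $e\in E_x$, of the edges issuing from $x$.

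I would then verify $L\psi(x)=p_\lambda(x)\psi(x)$ pointwise by partitioning $V$ into the exterior $V\setminus\Omega$, the thick boundary $\partial_2\Omega$ (Definition \ref{definition: thick boundary}), and the interior $\Omega\setminus\partial_2\Omega$. On the first two regions $\psi(x)=0$ (by definition of $P_\Omega$ on the exterior, and because $\varphi$ vanishes on $\partial_2\Omega$ by hypothesis), so it suffices to prove $L\psi(x)=0$ there; on the interior $\psi(x)=\varphi(x)$. For $x$ in the interior, every neighbour $y=t(e)$ satisfies $d(y,V\setminus\Omega)\geq d(x,V\setminus\Omega)-1\geq 2$, hence $y\in\Omega$ and $\psi(y)=\varphi(y)$; therefore $L\psi(x)=L\varphi(x)=p_\lambda(x)\varphi(x)=p_\lambda(x)\psi(x)$, using $L\varphi=p_\lambda\varphi$. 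For $x\in V\setminus\Omega$, any neighbour $y\in\Omega$ satisfies $d(y,V\setminus\Omega)\leq 1$, so $y\in\partial_2\Omega$ and $\varphi(y)=0$, while neighbours outside $\Omega$ contribute $\psi(y)=0$; hence $L\psi(x)=0$.

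The crux, and the step I expect to resist a one-line treatment, is the thick boundary $x\in\partial_2\Omega$. Here $\varphi(x)=0$, and I would split according to whether $x$ has a neighbour outside $\Omega$. If every neighbour of $x$ lies in $\Omega$, then $d(x,V\setminus\Omega)=2$ and $L\psi(x)=L\varphi(x)=p_\lambda(x)\varphi(x)=0$ by the equation $L\varphi=p_\lambda\varphi$ evaluated at the point $x$ where $\varphi$ vanishes. If instead some neighbour of $x$ lies outside $\Omega$, then $d(x,V\setminus\Omega)=1$, and every neighbour $y\in\Omega$ then satisfies $d(y,V\setminus\Omega)\leq d(y,x)+d(x,V\setminus\Omega)\leq 2$, so $y\in\partial_2\Omega$ and $\varphi(y)=0$; thus $L\psi(x)=0$ directly. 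This dichotomy is exactly where the hypothesis bites: it is the thickness $2$ (rather than $1$) of the boundary that guarantees, in the second subcase, that the $\Omega$-neighbours of a depth-one vertex are themselves places where $\varphi$ is already known to vanish. With $L\psi=p_\lambda\psi$ established on all of $V$, Proposition \ref{proposition: Schrodinger and Markov} yields $H\psi=\lambda\psi$, proving (1); and (2) follows as noted above.
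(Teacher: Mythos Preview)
Your proof is correct and follows essentially the same approach as the paper's own argument: both reduce to the equivalent equation $L\psi=p_\lambda\psi$ via Proposition~\ref{proposition: Schrodinger and Markov} and then verify it pointwise by a case analysis on the depth $d(x,V\setminus\Omega)$. The only cosmetic difference is in how the cases are grouped: the paper splits into the two cases $d(x,V\setminus\Omega)\geq 2$ versus $d(x,V\setminus\Omega)<2$, whereas you partition into exterior, thick boundary, and interior and then subdivide the thick boundary into depth~$1$ and depth~$2$; the computations in each case are the same.
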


\begin{proof} Let $L$ and $p_{\lambda}$ be the bounded operator and the bounded potential defined in Proposition \ref{proposition: Schrodinger and Markov}.  We know that the equations
	\[
		H\varphi=\lambda\varphi,
	\]
	and
	\[
		L\varphi=p_{\lambda}\varphi,
	\]
are equivalent. Hence, in order to prove the first implication of the lemma, it is enough to show that the equality $L\varphi=p_{\lambda}\varphi$ implies 
\[
	LP_{\Omega}\varphi(x)=p_{\lambda}(x)P_{\Omega}\varphi(x),\,\forall x\in V.
\]
We consider two cases.
\begin{enumerate}
	\item Assume $d(x,V\setminus\Omega)\geq 2$ (in other words: $x$ lies in $\Omega$ at depth $2$ or more).
	When restricted to $\Omega$, the functions  $P_{\Omega}\varphi$ and $\varphi$ are equal. Hence, on one hand 
	we have: 
	\[
		P_{\Omega}\varphi(x)=\varphi(x),
	\]
	and on the other hand,
	\[
		LP_{\Omega}\varphi(x)=L\varphi(x)
	\]
	(because $t(e)\in\Omega$ for any $e\in E_x$).
	\item Assume $d(x,V\setminus\Omega)<2$ (in other words: either $x$ does not belong to $\Omega$ or $x$ lies at depth $1$ in $\Omega$).
	If $x$ does not belong to $\Omega$ then $P_{\Omega}\varphi(x)=0$. If $x$ lies at depth $1$ in $\Omega$, then $\varphi(x)=0$ by hypothesis  (notice, for later use - when proving below that $LP_{\Omega}\varphi(x)$ vanishes - that the same conclusion holds if $x$ lies at depth $2$)
	and, as already mentioned,  $P_{\Omega}\varphi$ and $\varphi$ are equal on $\Omega$. So again $P_{\Omega}\varphi(x)=0$. We claim that
	$LP_{\Omega}\varphi(x)=0$ too.  To prove this claim, notice first that if  $e\in E_x$, then $t(e)$  belongs either to the thick boundary of
	$\Omega$, or to $V\setminus\Omega$. In both cases $P_{\Omega}\varphi(t(e))=0$ as explained before the claim. This proves the claim. 
\end{enumerate}
The second implication in the lemma follows immediately from the first implication.
\end{proof}

\subsection{One-by-one exhaustions}
\begin{defi}\label{definition: exhaustion}(One-by-one exhaustion.) Let $\Gamma$ be a connected graph with vertex set $V$ and path-metric $d$. Let $\Omega\subset V$. A \emph{one-by-one exhaustion} $(V_n)_{n\geq 0}$ of $\Omega$
is a countable non decreasing sequence of subsets of $V$,
\[
	V_0\subset\cdots\subset V_n\subset V_{n+1}\subset\cdots
\]
satisfying the following conditions:
\begin{enumerate}
	\item $V_0=V\setminus\Omega$,\label{exhaustion start}
	\item for each $n$, there exists $x_n\in V_n$, such that
	\[
		 V_{n+1}=V_n\cup B(x_n,1),
	\]\label{exhaustion ball} 
	\item either  $|V_{n+1}\setminus V_n|=1$ or $V_n=V$,\label{exhaustion one-by-one }
	\item $\bigcup_nV_n=V$.\label{exhaustion end}
	
\end{enumerate}
\end{defi}

\begin{lemm}\label{lemma: one way into Omega}(One way into $\Omega$.)
Let $\Gamma$ be a connected graph with vertex set $V$ and path-metric $d$. Let $\Omega\subset V$. Let $(V_m)_{m\geq 0}$ be a one-by-one exhaustion of $\Omega$. Assume that for a given integer $n\geq 0$, the set  $V_n$ is strictly contained in $V$ (that is there exists at least one vertex of $\Gamma$ which is not in $V_n$). Let  $x_n\in V_n$, such that $ V_{n+1}=V_n\cup B(x_n,1)$.
Let $y_{n+1}$ be the unique element of $V_{n+1}\setminus V_n$.
Then
\[
	E_{x_n}=\{e\in E_{x_n}: t(e)=y_{n+1}\}\cup \{e\in E_{x_n}: t(e)\in V_n\}.
\]	
\end{lemm}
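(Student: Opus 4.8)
The plan is to prove only the inclusion $E_{x_n}\subseteq\{e\in E_{x_n}: t(e)=y_{n+1}\}\cup\{e\in E_{x_n}: t(e)\in V_n\}$, since the reverse inclusion is immediate: both sets on the right-hand side are by definition subsets of $E_{x_n}$.

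First I would determine the shape of $V_{n+1}\setminus V_n$. Because $V_n$ is assumed to be strictly contained in $V$, the alternative ``$V_n=V$'' offered by Definition \ref{definition: exhaustion} is ruled out, so necessarily $|V_{n+1}\setminus V_n|=1$. Since $y_{n+1}$ is defined to be the unique element of $V_{n+1}\setminus V_n$, this gives $V_{n+1}=V_n\cup\{y_{n+1}\}$ with $y_{n+1}\notin V_n$; in particular the two sets appearing on the right-hand side of the claimed identity are disjoint.

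Next I would control the terminus of an arbitrary edge issuing from $x_n$. Fixing $e\in E_{x_n}$, so that $o(e)=x_n$, the single edge $e$ constitutes a path of length $1$ from $x_n$ to $t(e)$, whence $d(x_n,t(e))\leq 1$ and therefore $t(e)\in B(x_n,1)$. Since $V_{n+1}=V_n\cup B(x_n,1)$ by the choice of $x_n$, we obtain $t(e)\in V_{n+1}=V_n\cup\{y_{n+1}\}$, so that either $t(e)\in V_n$ or $t(e)=y_{n+1}$. This is exactly the inclusion to be proved; the degenerate case of a loop $t(e)=x_n$ is harmless, since $x_n\in V_n$.

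There is no genuine obstacle: the entire content is that the termini of edges in $E_{x_n}$ are neighbours of $x_n$ and hence lie in the ball $B(x_n,1)$, while enlarging $V_n$ by this ball adds the single new vertex $y_{n+1}$. The only point deserving a moment's attention is invoking the one-by-one condition of Definition \ref{definition: exhaustion} to exclude $V_n=V$ and thereby guarantee that $V_{n+1}\setminus V_n$ is genuinely the singleton $\{y_{n+1}\}$ rather than a larger set.
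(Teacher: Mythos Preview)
Your proof is correct and follows essentially the same approach as the paper: both note that the reverse inclusion is tautological, then for $e\in E_{x_n}$ observe that $t(e)\in B(x_n,1)\subset V_{n+1}=V_n\cup\{y_{n+1}\}$. Your version is slightly more detailed (explicitly invoking the one-by-one condition to justify $|V_{n+1}\setminus V_n|=1$ and remarking on loops), but the argument is the same.
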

\begin{proof} The inclusion
\[
	E_{x_n}\supset\{e\in E_{x_n}: t(e)=y_{n+1}\}\cup \{e\in E_{x_n}: t(e)\in V_n\}	
\]	
is tautological. In order to prove the other inclusion, let $e\in E_{x_n}$. We have: 
\[
	t(e)\in B(x_n,1)\subset V_n\cup B(x_n,1)=V_{n+1}=V_n\cup\{y_{n+1}\}.
\]	
\end{proof}

\begin{theo}\label{theorem: exhaustion implies lambda-uniqueness}(One-by-one exhaustion implies $\lambda$-uniqueness.) Let $\Gamma$ be a weighted graph with vertex set $V$. Assume $\Gamma$ is connected and satisfies the Sunada-Sy condition. Let $H$ be a Schr\"odinger operator on $\Gamma$. If $\Omega\subset V$ admits a one-by-one exhaustion, then $H$ satisfies $\lambda$-uniqueness on $\Omega$ for any $\lambda\in\mathbb R$.
\end{theo}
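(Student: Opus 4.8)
The plan is to fix a $\lambda$-eigenfunction $\varphi\in l^2(V,m_V)$ of $H$ whose support is contained in $\Omega$ (i.e. $\varphi$ vanishes on $V\setminus\Omega$), and to prove by induction along the one-by-one exhaustion $(V_n)_{n\geq 0}$ that $\varphi$ vanishes on each $V_n$. Since $\bigcup_n V_n=V$ by condition (\ref{exhaustion end}) of Definition \ref{definition: exhaustion}, this forces $\varphi\equiv 0$, which is exactly $\lambda$-uniqueness on $\Omega$ in the sense of Definition \ref{definition: uniqueness}.

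First I would translate the eigenvalue equation into the more convenient form supplied by Proposition \ref{proposition: Schrodinger and Markov}: the identity $H\varphi=\lambda\varphi$ is equivalent to $L\varphi=p_\lambda\varphi$, where $L\varphi(x)=\frac{1}{m_V(x)}\sum_{e\in E_x}\varphi(t(e))m_E(e)$. The advantage of this form is that evaluating it at a vertex $x$ only involves the values of $\varphi$ at the termini of edges issuing from $x$, so it is perfectly suited to a neighbor-by-neighbor propagation argument along the exhaustion.

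For the induction, the base case is condition (\ref{exhaustion start}): since $V_0=V\setminus\Omega$, the function $\varphi$ vanishes on $V_0$ by the support hypothesis. For the inductive step, assume $\varphi$ vanishes on $V_n$. If $V_n=V$ we are done; otherwise condition (\ref{exhaustion ball}) provides $x_n\in V_n$ with $V_{n+1}=V_n\cup B(x_n,1)$, and by condition (\ref{exhaustion one-by-one }) the set $V_{n+1}\setminus V_n$ consists of a single vertex $y_{n+1}$. I would then evaluate $L\varphi=p_\lambda\varphi$ at $x_n$. On the right-hand side, $x_n\in V_n$ gives $\varphi(x_n)=0$, hence $p_\lambda(x_n)\varphi(x_n)=0$. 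On the left-hand side, Lemma \ref{lemma: one way into Omega} splits $E_{x_n}$ into the edges terminating at $y_{n+1}$ and the edges terminating in $V_n$; the latter contribute nothing since $\varphi$ vanishes on $V_n$. What survives is $\varphi(y_{n+1})$ multiplied by the factor $\frac{1}{m_V(x_n)}\sum_{e\in E_{x_n},\,t(e)=y_{n+1}}m_E(e)$.

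The one point that requires care — and really the only obstacle — is showing that this factor is strictly positive, so that it may be divided out to conclude $\varphi(y_{n+1})=0$. This reduces to verifying that at least one edge of $E_{x_n}$ actually terminates at $y_{n+1}$. Since $y_{n+1}\in B(x_n,1)$ and $y_{n+1}\neq x_n$ (as $x_n\in V_n$ while $y_{n+1}\notin V_n$), the path-metric gives $d(x_n,y_{n+1})=1$, so by the very definition of a path of length one there exists $e\in E_{x_n}$ with $t(e)=y_{n+1}$. Because all edge weights $m_E$ and all vertex weights $m_V$ are strictly positive, the factor is a strictly positive real number. Therefore $\varphi(y_{n+1})=0$, so $\varphi$ vanishes on $V_{n+1}=V_n\cup\{y_{n+1}\}$, which closes the induction and completes the proof.
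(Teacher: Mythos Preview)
Your proof is correct and follows essentially the same approach as the paper: induction along the exhaustion, translation of the eigenvalue equation via Proposition~\ref{proposition: Schrodinger and Markov}, splitting $E_{x_n}$ via Lemma~\ref{lemma: one way into Omega}, and concluding $\varphi(y_{n+1})=0$ from the strict positivity of the edge weights together with $d(x_n,y_{n+1})=1$. The paper's argument is identical in structure and detail.
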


We emphasize again that the above statement requires no hypothesis on the automorphism group of the graph (no group is involved here). The theorem has the following obvious corollary:

\begin{coro} Let $\Gamma$ be a weighted graph with infinite vertex set $V$. Assume $\Gamma$ is connected and satisfies the Sunada-Sy condition. Let $H$ be a Schr\"odinger operator on $\Gamma$. Assume for any finite set $F\subset V$ of vertices, there exists a finite set $\Omega\subset V$ which contains $F$ and which admits a one-by-one exhaustion. Then $H$ has no square summable eigenfunction with finite support.
\end{coro}

We prove the theorem.
\begin{proof} Let $\varphi\in l^2(V,m_V)$ such that $H\varphi=\lambda\varphi$. We have to prove the implication:
\[
	(\forall x\in V\setminus\Omega\,\,\, \varphi(x)=0)\implies  (\forall x\in V\,\,\, \varphi(x)=0).
\]
The implication is obviously true if $V\setminus\Omega$ is empty. Hence we may assume 
$V_0=V\setminus\Omega$ is non-empty. Let $(V_n)_{n\geq 0}$ be a one-by-one exhaustion of $\Omega$. We proceed by induction on $n$. By hypothesis, we know that the restriction of $\varphi$ to $V_0$ is identically equal to zero. Assume that $V_n$ is strictly included in $V$ and assume that the restriction of $\varphi$ to $V_n$ is identically equal to zero. Let us show that this implies that the restriction of $\varphi$ to $V_{n+1}$ is identically equal to zero. Notice that this will finish the proof because of Conditions \ref{exhaustion end} and \ref{exhaustion one-by-one } in Definition \ref{definition: exhaustion}.  Let $x_n\in V_n$ as in Condition \ref{exhaustion start} in Definition \ref{definition: exhaustion}. 
Our induction hypothesis, implies that
\[
	\varphi(x_n)=0.
\]
Let $L$ and $p_{\lambda}$ be defined as in Proposition \ref{proposition: Schrodinger and Markov}. Applying Proposition \ref{proposition: Schrodinger and Markov} to the hypothesis $H\varphi=\lambda\varphi$, we obtain:
\[
	0=\varphi(x_n)=p_{\lambda}(x_n)\varphi(x_n)=L\varphi(x_n).
\]
We may rewrite this equality as:
\[
	\sum_{e\in E_{x_n}}\varphi(t(e))m_E(e)=0.
\]
As $V_n$ is strictly included in $V$,  Lemma \ref{lemma: one way into Omega} applies and implies that 
\[
	E_{x_n}=\{e\in E_{x_n}: t(e)=y_{n+1}\}\cup \{e\in E_{x_n}: t(e)\in V_n\}.	
\] 
Hence:
\[
	\sum_{e\in E_{x_n}:\, t(e)=y_{n+1}}\varphi(t(e))m_E(e)=-\sum_{e\in E_{x_n}:\, t(e)\in V_n}\varphi(t(e))m_E(e).
\]
According to the induction hypothesis, the right-hand side vanishes. This proves that 
\[
\varphi(y_{n+1})\left(\sum_{e\in E_{x_n}:\, t(e)=y_{n+1}}m_E(e)\right)=0.	
\]
As $|\{e\in E_{x_n}: t(e)=y_{n+1}\}|\geq 1$ (because $d(x_n,y_{n+1})=1$), and as the weight $m_E$ is strictly positive on all edges, we deduce that $\varphi(y_{n+1})=0$. Hence $\varphi$ vanishes
on $V_{n+1}=V_n\cup\{y_{n+1}\}$.
\end{proof}

\begin{theo}\label{theorem: exhaustion by level sets}(Height functions bring $\lambda$-uniqueness and exhaustions.) Let $\Gamma$ be a connected graph with vertex set $V$ and path-metric $d$. Let $\mathbb Z$ be the set of integers with its usual metric. Assume there exists a function $h:V\to\mathbb Z$ with the following properties:
\begin{enumerate}[label=(\Alph*)]
	\item for any $x,y\in V$, $|h(x)-h(y)|\leq d(x,y)$,
	\item for each $b\in V$ there exists \emph{exactly} one element $c\in V$ such that $d(b,c)=1$ and such that $h(c)=h(b)+1$,
	\item for each $b\in V$ there exists \emph{at least} one vertex $a\in V$ such that $d(a,b)=1$ and such that $h(a)=h(b)-1$.
\end{enumerate}
Then the following holds true.
\begin{enumerate}
	\item Any finite subset $\Omega\subset V$ 
admits a one-by-one exhaustion.
    \item If $\Gamma$ admits weights satisfying the Sunada-Sy condition and if $H=\Delta+q$ is a Schr\"odinger operator on the weighted graph $\Gamma$, defined by a bounded potential $q$, then $H$ satisfies $\lambda$-uniqueness, for any $\lambda\in\mathbb R$, on any subset $\Omega\subset V$ such that
\[
	\min_{\omega\in\Omega}h(\omega)>-\infty.
\] 
\end{enumerate}
\end{theo}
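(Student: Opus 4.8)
The plan is to treat the two conclusions in parallel, using three elementary consequences of the hypotheses. First, property (B) turns the assignment of the unique ``up-neighbour'' into a well-defined map $U\colon V\to V$, where $U(b)$ is the unique vertex with $d(b,U(b))=1$ and $h(U(b))=h(b)+1$. Second, property (C) says precisely that $U$ is surjective: given $b$, a down-neighbour $a$ of $b$ satisfies $U(a)=b$. Third, property (A) forces every edge to change $h$ by at most $1$, so the neighbours of a vertex at level $\ell$ all lie in levels $\ell-1,\ell,\ell+1$; combined with (B) this means a vertex has \emph{exactly one} neighbour one level above it. This last statement is the engine of both proofs.

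For conclusion (1), I would build the exhaustion by peeling $\Omega$ \emph{from the bottom}. Set $V_0=V\setminus\Omega$ and, given $V_n$ with $R_n:=V\setminus V_n\subseteq\Omega$ nonempty (note $V_n\supseteq V_0$ forces $R_n\subseteq\Omega$), choose $y_{n+1}\in R_n$ of minimal height and let $x_n$ be a down-neighbour of $y_{n+1}$, which exists by (C); then $y_{n+1}=U(x_n)$ and $h(x_n)<\min_{R_n}h$, whence $x_n\in V_n$. The key point is that $x_n$ has a unique neighbour in $R_n$: by (A) every $R_n$-neighbour of $x_n$ must sit at the minimal level $h(x_n)+1$, and by (B) there is only one neighbour there, namely $U(x_n)=y_{n+1}$. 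Hence $B(x_n,1)\setminus V_n=\{y_{n+1}\}$ and $V_{n+1}=V_n\cup B(x_n,1)$ adds a single vertex. Since $\Omega$ is finite, $R_n$ empties after $|\Omega|$ steps, giving $\bigcup_nV_n=V$ and verifying all four conditions of Definition \ref{definition: exhaustion}.

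For conclusion (2), rather than invoke an exhaustion (which need not exist when $\Omega$ is infinite), I would argue directly by induction on the level, again from the bottom. Let $\varphi\in l^2(V,m_V)$ satisfy $H\varphi=\lambda\varphi$ and vanish off $\Omega$, and put $m_0=\min_{\omega\in\Omega}h(\omega)\in\mathbb Z$, so $\varphi$ already vanishes on every level $<m_0$. By Proposition \ref{proposition: Schrodinger and Markov} the eigenvalue equation reads $\sum_{e\in E_a}\varphi(t(e))m_E(e)=m_V(a)p_\lambda(a)\varphi(a)$ at every vertex $a$. Assuming inductively that $\varphi=0$ on all levels $\le k$ (with $k\ge m_0-1$), I would fix any $c$ at level $k+1$, write $c=U(a)$ with $h(a)=k$ using surjectivity of $U$, and evaluate this equation at $a$: the left-hand side collapses because all neighbours of $a$ at levels $\le k$ contribute $0$ by the inductive hypothesis while, by (A)--(B), the only neighbour at level $k+1$ is $c$; since $\varphi(a)=0$, the right-hand side vanishes, leaving $\varphi(c)\big(\sum_{e\colon t(e)=c}m_E(e)\big)=0$ and hence $\varphi(c)=0$. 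As every level is eventually reached, $\varphi\equiv0$.

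The step I expect to be the real obstacle is choosing the correct direction of the induction and of the peeling. The symmetric-looking temptation is to peel or induct \emph{from the top} via up-neighbours, but $U$ need not be injective, so a top-level vertex may fail to be the unique preimage of its up-neighbour and then no single vertex can be stripped off. Orienting everything downward repairs this: uniqueness of the up-neighbour in (B), the asymmetric hypothesis, guarantees that the external down-neighbour has a \emph{single} entry point into the remaining set, while (C) guarantees such a down-neighbour always exists. The remaining work is purely bookkeeping: since $L$ is bounded the sums converge on $\varphi\in l^2(V,m_V)$, and $\sum_{e\colon t(e)=c}m_E(e)>0$ because $d(a,c)=1$ provides at least one edge of strictly positive weight, so that one may divide.
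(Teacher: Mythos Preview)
Your proof is correct and follows essentially the same approach as the paper's. For (1), the paper phrases the peeling as an induction on $|\Omega|$ while you build the exhaustion iteratively, but in both cases one selects a minimal-height vertex of the remaining set, picks a down-neighbour via (C), and uses (A)--(B) to see that this down-neighbour has exactly one neighbour left to remove. For (2), the paper argues by contradiction (choose $\omega_0$ of minimal height with $\varphi(\omega_0)\neq 0$ and derive $\varphi(\omega_0)=0$), while you run the equivalent upward induction on the level; the core computation at the down-neighbour $a$ is identical.
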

\begin{exam} A Busemann function on a simplicial tree without leaves gives an example of a height function.
\end{exam}
\begin{exam} (An height function on a Baumslag-Solitar group.) Let $G$ be the group generated by the affine transformations of the real line $a(x)=2x$ and $b(x)=x+1$. There is an exact sequence of groups,
	\[
		\{0\}\to\mathbb Z[1/2]\to G\to\mathbb Z\to\{0\}
	\]
where the projection $h:G\to\mathbb Z$ satisfies $h(a)=1$ and $h(b)=0$. The homomorphism $h$ is an example of a height function on the Cayley graph of $G$ with respect to $S=\{a;b\}$. 
\end{exam}
\begin{proof} We first prove that any finite subset $\Omega\subset V$ 
admits a one-by-one exhaustion.  We proceed by induction on $k=|\Omega|$. If $k=0$ then $\Omega$ is empty and $V_0=V$ is a one-by-one exhaustion. Assume we are given $\Omega$ with $|\Omega|=k\geq 1$ and we know by induction hypothesis that any subset with strictly less than $k$ elements admits a one-by-one exhaustion. Let $V_0=V\setminus\Omega$. Let $y\in \Omega$ such that 
\[
h(y)=\min_{\omega\in\Omega}h(\omega).
\]
Let $V_1=V_0\cup\{y\}$. 
By hypothesis there exists $x\in V$ such that	
$d(x,y)=1$ and such that $h(x)=h(y)-1$.  Obviously, $x\in V_0$.
Let us check that
\[
	V_1=V_0\cup B(x,1).
\]
The inclusion $V_1\subset V_0\cup B(x,1)$ is obvious. In order to prove the other inclusion, it is enough to prove that if $z\in B(x,1)$,
then either $z\in V_0$ or $z=y$. We know that
\[
	|h(z)-h(x)|\leq d(z,x)\leq 1.
\]
Hence $h(z)=h(x)+\epsilon$ where $\epsilon\in\{-1;0;1\}$. But
\[
	h(z)=h(x)+\epsilon=h(y)-1+\epsilon.
\]
If $\epsilon\in\{-1;0\}$, then $z\in V_0$. If $\epsilon=1$, then $h(z)=h(y)$. In this case, both $y$ and $z$ are at distance exactly one from $x$, at equal height $h(x)+1$. This forces $z=y$.
By induction hypothesis, the set $\Omega\setminus\{y\}$ admits a one-by-one  exhaustion $W_0=V\setminus(\Omega\setminus\{y\})\subset\cdots\subset W_n$.
We have $V_0\subset V_1=W_0$. Setting $V_i=W_{i-1}$ for $1\leq i\leq n+1$ defines a one-by-one exhaustion of $\Omega$.

Now we prove that $H$ satisfies $\lambda$-uniqueness on any subset $\Omega\subset V$ such that
\[
	\min_{\omega\in\Omega}h(\omega)>-\infty.
\] 
Let $\lambda\in\mathbb R$ and let  $\varphi\in l^2(V,m_V)$. Assume  that $H\varphi=\lambda\varphi$. 
We have to prove the implication:
\[
	(\forall x\in V\setminus\Omega\,\,\, \varphi(x)=0)\implies  (\forall x\in V\,\,\, \varphi(x)=0).
\]
We proceed by contradiction: suppose $\varphi$ is not identically equal to zero. Then we may choose $\omega_0\in\Omega$ such that $\varphi(\omega_0)\neq 0$ and such that
\[
	h(\omega_0)=\min\{\ h(\omega): \omega\in\Omega, \varphi(\omega)\neq 0\}.
\]
By hypothesis, there exists $a\in V$ such that $d(a,\omega_0)=1$ and such that $h(a)=h(\omega_0)-1$. We have $\varphi(a)=0$. We proceed as in the proof of Theorem \ref{theorem: exhaustion implies lambda-uniqueness}. Namely, we apply Proposition \ref{proposition: Schrodinger and Markov} to obtain:
\[
	0=\varphi(a)=p_{\lambda}(a)\varphi(a)=L\varphi(a)=\sum_{e\in E_{a}}\varphi(t(e))m_E(e).\label{equation: 0=L}
\]
We claim that
\[
E_a=\{e\in E_{a}:\, t(e)=\omega_0\}\cup\{e\in E_{a}:\, t(e)\in V\setminus \Omega\}.	
\]
One  inclusion is obvious. In order to prove the other, let $e\in E_a$. We have:
\[
	|h(t(e))-a|\leq d(t(e),a)\leq 1.
\]
Hence there exists $\epsilon\in\{-1;0;1\}$ such that $h(t(e))=h(a)+\epsilon$. If $\epsilon\in\{-1;0\}$ then
\[
	h(t(e))\leq h(a)<h(\omega_0),
\]
hence $t(e)\in V\setminus\Omega$. In the case $\epsilon=1$,
\[
	h(t(e))= h(a)+1=h(\omega_0).
\]
Hence, as $t(e)$ and $\omega_0$ are two vertices at distance exactly $1$ from the vertex $a$, this forces $t(e)=\omega_0$. This finishes the proof of the claim. Applying the claim to Equation \ref{equation: 0=L}, we obtain:
\[
\sum_{e\in E_{a}:\, t(e)=\omega_0}\varphi(t(e))m_E(e)=-\sum_{e\in E_{a}:\, t(e)\in V\setminus\Omega}\varphi(t(e))m_E(e).
\]
The right-hand side vanishes by hypothesis. The left-hand side equals
\[
\varphi(\omega_0)\left(\sum_{e\in E_{a}:\, t(e)=\omega_0}m_E(e)\right).
\]
As the weights $m_E(e)$ are all strictly positive and as $$|\{e\in E_{a}:\, t(e)=\omega_0\}|\geq 1,$$ (because $d(a,\omega_0)=1$), we deduce that
$\varphi(\omega_0)=0$. This is a contradiction. Hence $\varphi$ has to be identically zero.
\end{proof}
	
\section{Dimensions of eigenspaces}
\subsection{Spectral projections}\label{subsection: spectral projections} 
Let $\mathcal H$ be a Hilbert space. Let $B(\mathcal H)$ be the $C^*$-algebra of bounded operators on $\mathcal H$. If $A\in B(\mathcal H)$, let $A^*$ denote its adjoint. Let
\[
	\|A\|=\sup_{\|v\|\leq 1}\|A(v)\|,
\]
be the operator norm of $A$.
Assume $A\in B(\mathcal H)$ is self-adjoint.
The spectrum $\sigma(A)$ of $A$ is a compact subset of the real line. Let $\mathcal B(\sigma(A))$ be the $C^*$-algebra of bounded Borel function on $\sigma(A)$, with involution defined by the equalities $f^*(x)=\overline{f(x)},\, \forall x\in\sigma(A)$,  and norm 
\[
	\|f\|=\sup_{x\in \sigma(A)}|f(x)|.
\]
For later use and for fixing the notation, we recall the Borel functional calculus form of the spectral theorem (see for example \cite[Theorem VII.2]{ReedSimon} or \cite[12.24]{Rudin}).
There is a unique map
\[
	\Phi:\mathcal B(\sigma(A))\to B(\mathcal H),
\]
with the following properties:
\begin{enumerate}
	\item $\Phi$ is $*$-morphism of algebras,
	\item $\forall f\in \mathcal B(\sigma(A)),\, \|\Phi(f)\|\leq\|f\|$,
	\item let $f$ be the function defined as $f(x)=x,\,\forall x\in\sigma(A)$, then $\Phi(f)=A$,
	\item if $f_n\in \mathcal B(\sigma(A))$, $n\in\mathbb N$, is a sequence which converges point-wise to $f\in \mathcal B(\sigma(A))$, then the sequence $\Phi(f_n)$, $n\in\mathbb N$, converges strongly to $\Phi(f)$.
\end{enumerate}

Let $\lambda\in\mathbb R$. Consider the Borel sets 
\[
]-\infty,\lambda]\cap\sigma(A),\,\,\,\,\{\lambda\}\cap\sigma(A),\,\,\,\,]-\infty,\lambda[ \cap \sigma(A), 
\]
their characteristic functions
\[
{\bf 1}_{]-\infty,\lambda]\cap\sigma(A)},\,\,\,\,{\bf 1}_{\{\lambda\}\cap\sigma(A)},\,\,\,\,{\bf 1}_{]-\infty,\lambda[\cap\sigma(A)},	
\]
and the corresponding operators defined by the Borel functional calculus:
\begin{align*}
	&E_{]-\infty,\lambda]}=\Phi\left({\bf 1}_{]-\infty,\lambda]\cap\sigma(A)}\right),\\
	&E_{\{\lambda\}}=\Phi\left({\bf 1}_{\{\lambda\}\cap\sigma(A)}\right),\\
	&E_{]-\infty,\lambda[}=\Phi\left({\bf 1}_{]-\infty,\lambda[\cap\sigma(A)}\right).	 
\end{align*}

It is customary to use the short notation:
\[
	E_{\lambda}=E_{]-\infty,\lambda]}.
\]
(To the interested reader, we recommend the elementary construction of $ E_{\lambda}$ and $E_{\{\lambda\}}$ explained in \cite[106. Fonctions d'une transformation sym\'etrique born\'ee]{RieszNagy}.) 
We recall the following well-known properties of $E_{\{\lambda\}}$ we will need.
Let $A\in B(\mathcal H)$ be self-adjoint. Let $\lambda\in\mathbb R$ and let $E_{\{\lambda\}}=\Phi\left({\bf 1}_{\{\lambda\}\cap\sigma(A)}\right)$.
	\begin{enumerate}
		\item The operator $E_{\{\lambda\}}$ is an orthogonal projection:
		\[
		E_{\{\lambda\}}=E_{\{\lambda\}}^*=E_{\{\lambda\}}^2.	
		\]
		\item  The operator $E_{\{\lambda\}}$ is positive:
		\[
			\forall v\in\mathcal H,\, \langle E_{\{\lambda\}}v,v\rangle\geq 0.
		\]
		\item Let $B\in B(\mathcal H)$. Assume that $AB=BA$. Then:
		\[
			BE_{\{\lambda\}}=E_{\{\lambda\}}B.
		\] 
	\end{enumerate} 
Recall also that an eigenspace is the range of a spectral projection. More precisely, let $A$ and $E_{\{\lambda\}}$ be as above. Then the image of the operator $E_{\{\lambda\}}$ is the $\lambda$-eigenspace of $A$:
	\[
	\emph{Im}\left(E_{\{\lambda\}}\right)=\{v\in \mathcal H: Av=\lambda v\}.
	\]
Recall also that the continuity of the integrated density is equivalent to the vanishing of spectral projections. More precisely,
consider any finite set $D\subset\mathcal H$ of vectors of $\mathcal H$ and the function
	\[
		N(\lambda)=\sum_{v\in D}\langle E_{\lambda}v,v\rangle.
	\]
The function $N(\lambda)$ is continuous at $\lambda_0$ if and only if 
$$\sum_{v\in D}\langle E_{\{\lambda_0\}}v,v\rangle=0.$$

\subsection{Boundaries to bound eigenspaces dimensions}
In a Dirichlet problem one seeks for a function which solves a specified equation on a given region and which takes prescribed values on the boundary of the region. 
Lemma \ref{lemma:  boundaries to bound eigenspaces dimensions} together with Proposition \ref{proposition: finite dimensional reduction} below are suitable formalizations of a pervading idea of harmonic analysis: in a Dirichlet problem the ``size'' of a $\lambda$-eigenspace is often controlled by the ``shape'' of the boundary. Recall from Subsection \ref{subsection: Uniqueness of eigenfunctions} above that $U_{\Omega}$ denotes ``the extension by zero outside of $\Omega$''.

\begin{lemma}\label{lemma:  boundaries to bound eigenspaces dimensions}(Boundaries to bound eigenspaces dimensions.) Let $\Gamma$ be a weighted connected graph satisfying the Sunada-Sy condition. Let $H=\Delta+q$ be a Schr\"odinger operator on $\Gamma$ defined by a real bounded potential $q$. Let $\Omega$ be a finite subset of vertices of $\Gamma$. Let $\lambda\in\mathbb R$. Suppose $H$ satisfies  $\lambda$-uniqueness on $\Omega$. Then the dimension of the image of the endomorphism $U_{\Omega}^* E_{\{\lambda\}}U_{\Omega}$ of the finite-dimensional space $l^2(\Omega,m_{\Omega})$, is bounded above by the cardinality of the thick boundary of $\Omega$:
	\[
		\emph{rank}\left(U_{\Omega}^* E_{\{\lambda\}}U_{\Omega}\right)\leq|\partial_2\Omega|.
	\]
\end{lemma}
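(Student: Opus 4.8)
The plan is to realize the rank bound as a finite-dimensional reduction governed by the boundary data, in the Dirichlet spirit evoked above: I will compare the endomorphism $U_\Omega^* E_{\{\lambda\}} U_\Omega$ of the finite-dimensional space $l^2(\Omega,m_\Omega)$ with an auxiliary linear map that only records the values of the relevant eigenfunctions on the thick boundary $\partial_2\Omega$, and then use Lemma \ref{lemma: gluing} to pass between their kernels. First I would introduce the map
\[
T\colon l^2(\Omega,m_\Omega)\longrightarrow \mathbb{C}^{\partial_2\Omega},\qquad T\psi=\bigl(E_{\{\lambda\}}U_\Omega\psi\bigr)\big|_{\partial_2\Omega},
\]
the composition of $\psi\mapsto E_{\{\lambda\}}U_\Omega\psi$ (extend by zero, then project onto the $\lambda$-eigenspace) with restriction to $\partial_2\Omega$. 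Since $\Omega$ is finite, so is $\partial_2\Omega$, and the target has dimension $|\partial_2\Omega|$; hence $\operatorname{rank}(T)\le|\partial_2\Omega|$ for free.

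The heart of the argument is the kernel inclusion $\ker T\subseteq\ker\bigl(U_\Omega^* E_{\{\lambda\}} U_\Omega\bigr)$. To establish it I would fix $\psi\in\ker T$ and set $\varphi=E_{\{\lambda\}}U_\Omega\psi$. By Proposition \ref{proposition: eigenspaces}, $\varphi$ lies in the image of $E_{\{\lambda\}}$, so it is a $\lambda$-eigenfunction of $H$; and by the definition of $T$ it vanishes on $\partial_2\Omega$. This is exactly the input required by Lemma \ref{lemma: gluing}, whose second conclusion---using the hypothesis that $H$ satisfies $\lambda$-uniqueness on $\Omega$---yields $P_\Omega\varphi=0$. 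Recalling that $P_\Omega=U_\Omega U_\Omega^*$ and that $U_\Omega$ is an isometric embedding, hence injective, this forces $U_\Omega^*\varphi=U_\Omega^* E_{\{\lambda\}}U_\Omega\psi=0$, i.e. $\psi\in\ker\bigl(U_\Omega^* E_{\{\lambda\}} U_\Omega\bigr)$.

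With the inclusion in hand, the conclusion is pure linear algebra on the finite-dimensional domain $l^2(\Omega,m_\Omega)$: rank--nullity gives $\dim\ker T\le\dim\ker\bigl(U_\Omega^* E_{\{\lambda\}} U_\Omega\bigr)$, whence $\operatorname{rank}\bigl(U_\Omega^* E_{\{\lambda\}} U_\Omega\bigr)\le\operatorname{rank}(T)\le|\partial_2\Omega|$. I expect the only genuinely delicate point to be the middle step: checking that $\varphi=E_{\{\lambda\}}U_\Omega\psi$ really is an eigenfunction vanishing on the full width-$2$ boundary so that the gluing lemma applies (this is where both the $\lambda$-uniqueness hypothesis and the choice of $\partial_2$ rather than $\partial_1$ enter), together with the small but essential observation that injectivity of $U_\Omega$ lets one pass from $P_\Omega\varphi=0$ to $U_\Omega^*\varphi=0$. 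Everything else is formal.
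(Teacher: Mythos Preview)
Your proof is correct and follows essentially the same route as the paper's: both arguments feed $\varphi=E_{\{\lambda\}}U_\Omega\psi$ into Lemma~\ref{lemma: gluing} once its restriction to $\partial_2\Omega$ vanishes, then use $P_\Omega=U_\Omega U_\Omega^*$ and the injectivity of $U_\Omega$ to kill $U_\Omega^*\varphi$. The only difference is packaging: the paper works on the image side (take $n>|\partial_2\Omega|$ elements of $\operatorname{Im}(U_\Omega^*E_{\{\lambda\}}U_\Omega)$, restrict to $\partial_2\Omega$, find a dependence, and show the corresponding combination is zero), whereas you phrase the same content as a kernel inclusion $\ker T\subseteq\ker(U_\Omega^*E_{\{\lambda\}}U_\Omega)$ and finish with rank--nullity; your formulation is arguably a little more streamlined since the vanishing on $\partial_2\Omega$ is built into the definition of $T$ rather than recovered via the identity $h(\omega)=(U_\Omega^*h)(\omega)$ for $\omega\in\Omega$.
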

\begin{proof} Consider a family of $n$ functions $g_1,\dots,g_n$, belonging to $$\mbox{Im}\left(U_{\Omega}^* E_{\{\lambda\}}U_{\Omega}\right).$$
Assume that 
	\[
		n>|\partial_2\Omega|.
	\]
The lemma will be proved if we show that the family $g_1,\dots,g_n$ is linearly dependent. As 
\[
\dim l^2(\partial_2\Omega,m_{\partial_2\Omega})=|\partial_2\Omega|<n,	
\]
there exists
$(\alpha_1,\dots,\alpha_n)\in \mathbb C^n\setminus\{0\}$, such that
\[
	\sum_{i=1}^n\alpha_i\left(g_i|_{\partial_2\Omega}\right)=0.
\]
Consider the corresponding linear combination:
\[
	g=\sum_{i=1}^n\alpha_ig_i\in \mbox{Im}\left(U_{\Omega}^* E_{\{\lambda\}}U_{\Omega}\right).
\]	
Let $V$ denotes the vertex set of $\Gamma$.
Let $f\in l^2(\Omega,m_{\Omega})$ such that $$g=U_{\Omega}^* E_{\{\lambda\}}U_{\Omega}f,$$ and let
$$h=E_{\{\lambda\}}U_{\Omega}f\in l^2(V,m_V).$$
Hence (see Subsection \ref{subsection: spectral projections}):
$Hh=\lambda h$.

Claim:
\[
	h|_{\partial_2\Omega}=0.
\]
In order to prove the claim, we expend $h$ with the help of the Hilbert basis 
$\delta_x,\, x\in V$, of $l^2(V,m_V)$. Namely:
\[
	h=\sum_{x\in V}h(x)\delta_x.
\]
Hence, according to Implications \ref{equation: U^*=0} and \ref{equation: U^*=1}, 
\[
U^*_{\Omega}h=\sum_{x\in V}h(x)U^*_{\Omega}\delta_x=\sum_{\omega\in\Omega}h(\omega)\delta_{\omega}|_{\Omega}.	
\]
In particular, for any $\omega\in\Omega$,
\[
	h(\omega)=(U^*_{\Omega}h)(\omega)=(U^*_{\Omega}E_{\{\lambda\}}U_{\Omega}f)(\omega)=g(\omega).
\]
In the case $\omega\in\partial_2\Omega$, remembering that $g|_{\partial_2\Omega}=0$, we obtain $h(\omega)=0$.
This finishes the proof of the claim. We can now finish the proof of the lemma. Indeed, according to the second statement of Lemma \ref{lemma: gluing} (which applies because of the claim, the fact that $Hh=\lambda h$, and the hypothesis of $\lambda$-uniqueness),
\[
	P_{\Omega}h=0.
\]
We then have:
\begin{align*}
	0&=P_{\Omega}h=P_{\Omega}E_{\{\lambda\}}U_{\Omega}f\\
	 &=U_{\Omega}U_{\Omega}^*E_{\{\lambda\}}U_{\Omega}f\\
	 &=U_{\Omega}g.
\end{align*}
As $U_{\Omega}$ is one-to-one we deduce that $g=0$. We conclude that
\[
	\sum_{i=1}^n\alpha_ig_i=g=0,
\]
is a non-trivial linear combination, showing that the family $g_1,\dots,g_n$ is linearly dependent.
\end{proof}
We will need the following basic result from linear algebra (for a proof, we refer the reader to \cite[3.6]{Simon}).
\begin{lemma}\label{lemma: trace} Let $\mathcal H$ be a finite dimensional Hilbert space. Let $A\in B(\mathcal H)$ be an endomorphism of $\mathcal H$ (viewed as an operator). Then the usual trace of $A$ is bounded by the rank of $A$ times its norm:
	\[
		|Tr(A)|\leq \mbox{rank}(A)\|A\|.
	\]
\end{lemma}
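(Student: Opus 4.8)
The plan is to prove the inequality $|Tr(A)|\leq \mbox{rank}(A)\|A\|$ by choosing a basis adapted to the structure of $A$. The key observation is that the trace is basis-independent, so I am free to pick the most convenient orthonormal basis of $\mathcal H$ for the computation. Let $r=\mbox{rank}(A)$. Since $A$ has rank $r$, its image $\mbox{Im}(A)$ is an $r$-dimensional subspace of $\mathcal H$. The first step is to choose an orthonormal basis $e_1,\dots,e_k$ of $\mathcal H$ (where $k=\dim\mathcal H$) whose first $r$ vectors $e_1,\dots,e_r$ span $\mbox{Im}(A)$; this is possible by completing an orthonormal basis of $\mbox{Im}(A)$ to one of $\mathcal H$.

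With respect to such a basis, I would write out the trace as
\[
	Tr(A)=\sum_{i=1}^k\langle Ae_i,e_i\rangle.
\]
The crucial point is that for each $i$, the vector $Ae_i$ lies in $\mbox{Im}(A)=\mbox{span}(e_1,\dots,e_r)$, so $\langle Ae_i,e_i\rangle=0$ whenever $i>r$ (because $e_i$ is then orthogonal to $\mbox{Im}(A)$). This collapses the sum to its first $r$ terms:
\[
	Tr(A)=\sum_{i=1}^r\langle Ae_i,e_i\rangle.
\]
Each summand is then bounded in absolute value by Cauchy--Schwarz:
\[
	|\langle Ae_i,e_i\rangle|\leq\|Ae_i\|\,\|e_i\|\leq\|A\|,
\]
using that $\|e_i\|=1$ and $\|Ae_i\|\leq\|A\|\|e_i\|=\|A\|$. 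Summing the $r$ terms and applying the triangle inequality gives $|Tr(A)|\leq r\|A\|$, which is the desired bound.

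I do not expect any serious obstacle here, since the argument is entirely elementary linear algebra. The one point requiring a little care is the reduction of the trace to the sum over $\mbox{Im}(A)$: one must justify that $\langle Ae_i,e_i\rangle$ vanishes for $i>r$ precisely because the basis was adapted so that $e_i\perp\mbox{Im}(A)$ for those indices, whereas $Ae_i$ always lands in $\mbox{Im}(A)$. This orthogonality is exactly what the choice of basis buys us, and it is the only place where the rank enters. An alternative, slightly slicker route would factor $A=PA$ where $P$ is the orthogonal projection onto $\mbox{Im}(A)$ and estimate $|Tr(PA)|$, but the adapted-basis computation is the most self-contained and matches the elementary spirit of the surrounding exposition.
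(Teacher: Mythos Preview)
Your argument is correct. It differs from the paper's proof in a genuinely useful way: you adapt the orthonormal basis to $\operatorname{Im}(A)$ and use only Cauchy--Schwarz, whereas the paper invokes the singular value decomposition $A(w)=\sum_{i=1}^r\lambda_i\langle w,u_i\rangle v_i$, computes $Tr(A)=\sum_j\lambda_j\langle v_j,u_j\rangle$ by extending the orthonormal family $u_1,\dots,u_r$ (a basis of $(\ker A)^{\perp}$) to an orthonormal basis of $\mathcal H$, and then proves separately that $\|A\|=\max_j\lambda_j$. Your route avoids the SVD and the auxiliary claim about the operator norm, so it is shorter and more elementary; the paper's route, in exchange, yields the sharper intermediate bound $|Tr(A)|\leq\sum_j\lambda_j$ (the trace norm of $A$) before passing to $r\|A\|$.
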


\begin{prop}\label{proposition: finite dimensional reduction}(Finite-dimensional reduction.) Let $\Gamma$ be a weighted connected graph satisfying the Sunada-Sy condition. Let $H=\Delta+q$ be a Schr\"odinger operator on $\Gamma$ defined by a real bounded potential $q$. Let $V$ be the vertex set of  $\Gamma$  and let
$\Omega\subset V$ be a finite subset. Let $\lambda\in\mathbb R$. Then
\[
	\sum_{x\in \Omega}\langle E_{\{\lambda\}}\delta_x,\delta_x\rangle\leq \emph{rank}(U_{\Omega}^*AU_{\Omega}).
\] 
\end{prop}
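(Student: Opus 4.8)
The plan is to reduce the statement to a trace-versus-rank inequality for the single finite-dimensional operator $P:=U_{\Omega}^*E_{\{\lambda\}}U_{\Omega}$ on $l^2(\Omega,m_{\Omega})$ (here $A$ is the spectral projection $E_{\{\lambda\}}$, exactly as in Lemma \ref{lemma:  boundaries to bound eigenspaces dimensions}). First I would record that $P$ is a \emph{positive contraction}. Positivity is immediate once one uses that $E_{\{\lambda\}}$ is an orthogonal projection (Proposition \ref{proposition: spectral projections}), so $E_{\{\lambda\}}=E_{\{\lambda\}}^*E_{\{\lambda\}}$ and therefore
\[
P=U_{\Omega}^*E_{\{\lambda\}}^*E_{\{\lambda\}}U_{\Omega}=(E_{\{\lambda\}}U_{\Omega})^*(E_{\{\lambda\}}U_{\Omega})\geq 0 .
\]
For the norm bound, $U_{\Omega}$ is a linear isometry and $\|E_{\{\lambda\}}\|\leq 1$, whence $\|E_{\{\lambda\}}U_{\Omega}\|\leq 1$ and $\|P\|=\|E_{\{\lambda\}}U_{\Omega}\|^2\leq 1$. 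Consequently every eigenvalue of $P$ lies in the interval $[0,1]$.

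Second, I would identify the left-hand side of the asserted inequality with the ordinary trace of $P$. Expanding $\mathrm{Tr}(P)$ in the orthonormal basis of $l^2(\Omega,m_{\Omega})$ consisting of the indicator functions $\delta_x$, $x\in\Omega$, and using $U_{\Omega}\delta_x=\delta_x$ together with the description of $U_{\Omega}^*$ in \ref{equation: U^*=0} and \ref{equation: U^*=1}, one gets for each $x\in\Omega$ the diagonal entry
\[
\langle P\delta_x,\delta_x\rangle=\langle E_{\{\lambda\}}U_{\Omega}\delta_x,U_{\Omega}\delta_x\rangle=\langle E_{\{\lambda\}}\delta_x,\delta_x\rangle .
\]
Summing over $x\in\Omega$ yields $\mathrm{Tr}(P)=\sum_{x\in\Omega}\langle E_{\{\lambda\}}\delta_x,\delta_x\rangle$, which is precisely the quantity to be bounded.

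Finally I would close the argument with Lemma \ref{lemma: trace}. Since $P$ is positive, $\mathrm{Tr}(P)$ is real and non-negative, so $\mathrm{Tr}(P)=|\mathrm{Tr}(P)|\leq \mathrm{rank}(P)\,\|P\|\leq\mathrm{rank}(P)$, the last step using $\|P\|\leq 1$. Combining with the identification of the previous step gives
\[
\sum_{x\in\Omega}\langle E_{\{\lambda\}}\delta_x,\delta_x\rangle=\mathrm{Tr}(P)\leq\mathrm{rank}(P)=\mathrm{rank}\big(U_{\Omega}^*E_{\{\lambda\}}U_{\Omega}\big),
\]
as desired; alternatively one may bypass Lemma \ref{lemma: trace} and simply note that the trace of the positive contraction $P$ is the sum of its eigenvalues, each at most $1$, hence at most the number of nonzero eigenvalues, namely $\mathrm{rank}(P)$. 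There is no serious analytic obstacle here: the whole point is the passage from the infinite-dimensional projection $E_{\{\lambda\}}$ to its finite-dimensional compression $P$ (legitimate because $\Omega$ is finite, so $l^2(\Omega,m_{\Omega})$ is finite-dimensional), and the only things to verify carefully are that $P$ is a positive contraction and that its diagonal in the $\delta_x$-basis reproduces the sum on the left.
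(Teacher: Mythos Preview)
Your argument is correct and follows the same route as the paper: identify the sum on the left as the ordinary trace of the finite-dimensional compression $U_{\Omega}^{*}E_{\{\lambda\}}U_{\Omega}$, and then apply Lemma~\ref{lemma: trace} together with the bound $\|E_{\{\lambda\}}\|\le 1$. The only differences are cosmetic---the paper carries a generic bounded operator $A$ through the computation before specializing to $A=E_{\{\lambda\}}$, while you work directly with $E_{\{\lambda\}}$ and additionally record that the compression is a positive contraction (not needed for Lemma~\ref{lemma: trace}, but it enables your pleasant alternative eigenvalue argument).
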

\begin{proof} Let $A\in B(l^2(V,m_V))$. For any $x\in V$, it is obvious that $U_{\Omega}(\delta_x|_{\Omega})=\delta_x$. Hence, Lemma \ref{lemma: trace} leads to the following upper bound:
	
\begin{align*}
\sum_{x\in \Omega}\langle A\delta_x,\delta_x\rangle&=\sum_{x\in \Omega}\langle AU_{\Omega}(\delta_x|_{\Omega}),U_{\Omega}(\delta_x|_{\Omega})\rangle\\
                          &=\sum_{x\in \Omega}\langle U_{\Omega}^*AU_{\Omega}(\delta_x|_{\Omega}),\delta_x|_{\Omega}\rangle\\
                          &=Tr( U_{\Omega}^*AU_{\Omega})\\
						  &\leq \mbox{\mbox{rank}}(U_{\Omega}^*AU_{\Omega})\|U_{\Omega}^*AU_{\Omega}\|.
\end{align*}
According to Subsection \ref{subsection: spectral projections},
\[
	\|E_{\{\lambda\}}\|=\|\Phi({\bf 1}_{\{\lambda\}})\|\leq\|{\bf 1}_{\{\lambda\}})\|=1,
\] 
hence we see that $\|U_{\Omega}^*E_{\{\lambda\}}U_{\Omega}\|\leq 1$. Choosing $A=E_{\{\lambda\}}$, we obtain
\[
	\sum_{x\in \Omega}\langle E_{\{\lambda\}}\delta_x,\delta_x\rangle\leq \mbox{rank}(U_{\Omega}^*E_{\{\lambda\}}U_{\Omega}).
\]

\end{proof}
\begin{prop}\label{proposition: boundaries to bound traces}(Boundaries to bound traces.) Let $\Gamma$ be a weighted connected graph satisfying the Sunada-Sy condition. Let $H=\Delta+q$ be a Schr\"odinger operator on $\Gamma$ defined by a real bounded potential $q$. Let $V$ be the vertex set of  $\Gamma$  and let
$\Omega\subset V$ be a finite subset. Let $\lambda\in\mathbb R$. Assume $H$ satisfies $\lambda$-uniqueness on $\Omega$. Then
\[
	\sum_{x\in \Omega}\langle E_{\{\lambda\}}\delta_x,\delta_x\rangle\leq|\partial_2\Omega|.
\] 
\end{prop}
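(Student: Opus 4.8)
The plan is to deduce the inequality by concatenating the two immediately preceding results, each of which supplies one half of the bound; no genuinely new argument is needed. First I would apply Proposition \ref{proposition: finite dimensional reduction}, specialised to $A=E_{\{\lambda\}}$ exactly as in its proof. Since $\|E_{\{\lambda\}}\|\le 1$ forces $\|U_{\Omega}^*E_{\{\lambda\}}U_{\Omega}\|\le 1$, that proposition yields
\[
	\sum_{x\in \Omega}\langle E_{\{\lambda\}}\delta_x,\delta_x\rangle\leq \mathrm{rank}\left(U_{\Omega}^*E_{\{\lambda\}}U_{\Omega}\right),
\]
and I would emphasise that this step uses only the finiteness of $\Omega$ and the normalisation coming from the Borel functional calculus, and does \emph{not} invoke the uniqueness hypothesis.

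The $\lambda$-uniqueness hypothesis enters only at the second step, through Lemma \ref{lemma:  boundaries to bound eigenspaces dimensions}, which bounds the rank of the very same compressed projection by the size of the thick boundary:
\[
	\mathrm{rank}\left(U_{\Omega}^*E_{\{\lambda\}}U_{\Omega}\right)\leq|\partial_2\Omega|.
\]
Chaining the two displayed inequalities immediately gives $\sum_{x\in\Omega}\langle E_{\{\lambda\}}\delta_x,\delta_x\rangle\le|\partial_2\Omega|$, which is precisely the assertion.

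There is essentially no obstacle in the present statement: all the real work has already been carried out in Lemma \ref{lemma:  boundaries to bound eigenspaces dimensions}, whose proof is the delicate part. There one restricts a vector of $\mathrm{Im}(E_{\{\lambda\}})$ to $\Omega$, arranges vanishing on $\partial_2\Omega$, invokes the cutting-and-pasting Lemma \ref{lemma: gluing} to manufacture a $\lambda$-eigenfunction supported in $\Omega$, and then uses $\lambda$-uniqueness to force linear dependence. The only point here that demands a little care is to keep the specialisation $A=E_{\{\lambda\}}$ consistent between the two cited statements, so that the rank appearing on the right of the first inequality is literally the quantity bounded on the left of the second.
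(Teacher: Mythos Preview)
Your proposal is correct and follows exactly the paper's own proof, which simply combines Proposition \ref{proposition: finite dimensional reduction} with Lemma \ref{lemma:  boundaries to bound eigenspaces dimensions}. The additional remarks you include about where the real work lies are accurate but extraneous to the present proposition.
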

\begin{proof} The inequality follows from the combination of Proposition \ref{proposition: finite dimensional reduction} with Lemma \ref{lemma:  boundaries to bound eigenspaces dimensions}.
	
\end{proof}

\section{Large-scale geometry}
\begin{defi}(Packing number relative to a family.) Let $X$ be a set. Suppose $\mathcal F$ is a given family of \emph{non-empty} subsets of $X$. For any subset $\Omega$ of $X$, we define the \emph{packing number of $\Omega$ relative to the family $\mathcal F$} as the maximal number of disjoint elements of $\mathcal F$ which are included in $\Omega$, with the convention that this number is zero if no element of $\mathcal F$ is included in $\Omega$:
\[
		    P(\Omega,\mathcal F)=0\,\, \mbox{\emph{if there is no $F\in \mathcal F$ such that $F\subset\Omega$}},
\]
\[
		    P(\Omega,\mathcal F)=\max\{n:\bigsqcup_{1\leq i\leq n} F_i\subset\Omega,\,F_i\in\mathcal F\} \,\, \mbox{\emph{otherwise}}.
\]
\end{defi}
Notice that in the case $\mathcal F$ contains each singleton of $X$, then $P(\Omega,\mathcal F)$ coincides with the cardinality of $\Omega$.

\begin{defi}(Nets and maximal nets.) Let $(X,d)$ be a metric space. Let $r\geq 0$. An \emph{$r$-net of $X$} is a subset $N(X,r)\subset X$ such that
\[
	\forall x,y\in N(X,r),\, (d(x,y)\leq r)\implies (x=y).
\] 
A \emph{maximal $r$-net of $X$} is an $r$-net of $X$ which is maximal with respect to the inclusion relation among all $r$-nets of $X$.
	
\end{defi}
Let $(X,d)$ be a metric space. Let $x\in X$ and let $r\geq 0$. Consider the closed ball in $X$ with center $x$ and radius $r$:
\[
	B(x,r)=\{y\in X: d(y,x)\leq r\}.
\]
We will need the following definition from \cite{CorHar}.
\begin{defi}(Locally finite spaces with uniform growth.) A metric space $(X,d)$ is \emph{locally finite with uniform growth}  if for each $r\geq 0$,
\[
	\emph{Vol}(r)=\sup_{x\in X}|B(x,r)|<\infty.
\]
In what follows we  shorten the terminology and say ``a space with uniform growth'' - omitting the ``locally finite''.
\end{defi}

\begin{lemma}\label{lemma: bounding a set with a net}(Bounding a set with a net.) Let $(X,d)$ be a metric space with uniform growth. Let $\Omega\subset X$ be a subset.
For $r\geq 0$, let $N(\Omega,r)$ be a $r$-net of $\Omega$. 
If $N(\Omega,r)$ is maximal then
\[
	|\Omega|\leq \emph{Vol}(r)\cdot|N(\Omega,r)|.
\]
\begin{proof} As $N(\Omega,r)$ is a maximal $r$-net of $\Omega$ there exists a map
	\[
		p:\Omega\to N(\Omega,r)
	\]
whose maximal displacement 	is bounded by $r$:
\[
	\forall x\in \Omega,\, d(x,p(x))\leq r.
\]
Hence: 
\[
\forall y\in N(\Omega,r),\, p^{-1}(\{y\})\subset B(y,r).	
\]
We conclude that $|p^{-1}(\{y\})|\leq \mbox{Vol}(r)$ and as 
\[
	\Omega\subset\bigsqcup_{y\in  N(\Omega,r)}p^{-1}(\{y\})
\]
we deduce that
\[
	|\Omega|\leq  \mbox{Vol}(r)\cdot|N(\Omega,r)|.
\]
\end{proof}
	
\end{lemma}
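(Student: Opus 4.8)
The plan is to extract from maximality the one geometric fact that drives everything---that a maximal $r$-net $r$-covers the set it lives in---and then turn this covering into the desired counting bound using the finiteness of $\mathrm{Vol}(r)$.

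First I would record the covering property: every $x\in\Omega$ lies within distance $r$ of $N(\Omega,r)$. To see this, suppose instead that some $x\in\Omega$ satisfied $d(x,y)>r$ for all $y\in N(\Omega,r)$. In particular $x\notin N(\Omega,r)$, and adjoining $x$ yields a strictly larger subset $N(\Omega,r)\cup\{x\}$ of $\Omega$ which is still an $r$-net: any two old points remain at distance $>r$ by the net property, and $x$ is at distance $>r$ from every old point by assumption. This contradicts the maximality of $N(\Omega,r)$.

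With the covering in hand, I would choose for each $x\in\Omega$ a point $p(x)\in N(\Omega,r)$ realizing $d(x,p(x))\le r$, thereby defining a map $p:\Omega\to N(\Omega,r)$ whose displacement is at most $r$. Each fiber then satisfies $p^{-1}(\{y\})\subset B(y,r)$, so the uniform growth hypothesis gives $|p^{-1}(\{y\})|\le|B(y,r)|\le\mathrm{Vol}(r)$ for every $y\in N(\Omega,r)$.

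Finally, since the fibers of $p$ partition $\Omega$, I would sum these fiber estimates over the net to conclude
\[
|\Omega|=\sum_{y\in N(\Omega,r)}|p^{-1}(\{y\})|\le\mathrm{Vol}(r)\cdot|N(\Omega,r)|.
\]
The whole argument is elementary and needs no finiteness assumption on $\Omega$ (the right-hand side is simply $+\infty$ when $|N(\Omega,r)|$ is infinite, and the empty case is trivial). The only step carrying genuine content is the maximality-implies-covering claim, so that is where I would concentrate, taking care to verify that adjoining a single far-away point genuinely preserves the net condition.
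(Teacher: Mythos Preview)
Your argument is correct and follows the same route as the paper: build a map $p:\Omega\to N(\Omega,r)$ of displacement at most $r$, bound each fiber by $\mathrm{Vol}(r)$, and sum. The only difference is that you spell out why such a $p$ exists (the maximality-implies-covering step), which the paper simply asserts; otherwise the two proofs coincide.
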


\begin{defi}(The $r$-interior of a subset.) Let $(X,d)$ be a metric space. Let $\Omega\subset X$ and $r\geq 0$. The $r$-interior of $\Omega$ is the set $I(\Omega,r)$ of points of $\Omega$ which lies at ``depth" strictly greater than $r$. More precisely
	\[
		I(\Omega,r)=\Omega\setminus\partial_r\Omega=\{x\in\Omega: d(x,X\setminus\Omega)>r\}.
	\]
	
\end{defi}

\begin{defi}(Inclusive radius.) Let $(X,d)$ be a metric space. Let $x\in X$. Let $\mathcal F$ be a family of \emph{non-empty} subsets of $X$. The \emph{inclusive radius $r(x,\mathcal F)$ at $x$ relative to the family} $\mathcal F$ is the minimal $r\geq 0$ such that there exists a set $F\in\mathcal F$  such that
	\[
		F\subset B(x,r).
	\]
\end{defi}

\begin{prop}\label{proposition: interior points as a lower bound for the packing number}(Interior points as a lower bound for the packing number.) Let $(X,d)$ be a metric space with uniform growth $\emph{Vol}$. Let $\mathcal F$ be a family of \emph{non-empty} subsets of $X$. Assume  $0\leq r<\infty$ is a uniform upper bound for the inclusive radii relative to  $\mathcal F$:
	\[
		\forall x\in X,\,\exists F_x\in\mathcal F: F_x\subset B(x,r).
	\]	
Then the number $|I(\Omega,r)|$ of $r$-interior points of $\Omega$ satisfies
\[
	|I(\Omega,r)|\leq \emph{Vol}(2r)\cdot P(\Omega,\mathcal F).	
\]
\end{prop}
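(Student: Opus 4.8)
The plan is to combine the net-counting estimate of Lemma \ref{lemma: bounding a set with a net} with the elementary observation that sufficiently separated $r$-interior points of $\Omega$ give rise to pairwise disjoint members of $\mathcal F$ lying inside $\Omega$.

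First I would fix a maximal $2r$-net $N$ of the $r$-interior $I(\Omega,r)$; such a net exists by Zorn's lemma, since the union of a chain of $2r$-nets is again a $2r$-net. Viewing $I(\Omega,r)$ as a subset of $X$ and applying Lemma \ref{lemma: bounding a set with a net} with radius $2r$ in place of $r$, the maximality of $N$ yields
\[
	|I(\Omega,r)|\leq \mbox{Vol}(2r)\cdot|N|.
\]
It therefore suffices to show $|N|\leq P(\Omega,\mathcal F)$.

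To this end I would attach to each net point $x\in N$ a set $F_x\in\mathcal F$ as follows. Since $x\in I(\Omega,r)$ we have $d(x,X\setminus\Omega)>r$, so the whole ball $B(x,r)$ is contained in $\Omega$; and by the uniform bound on inclusive radii there exists $F_x\in\mathcal F$ with $F_x\subset B(x,r)\subset\Omega$. These sets are pairwise disjoint: for distinct $x,y\in N$ the net condition forces $d(x,y)>2r$, so the triangle inequality gives $B(x,r)\cap B(y,r)=\emptyset$ and hence $F_x\cap F_y=\emptyset$. Thus $\{F_x:x\in N\}$ is a family of $|N|$ pairwise disjoint non-empty elements of $\mathcal F$, all included in $\Omega$, and the definition of the packing number gives $|N|\leq P(\Omega,\mathcal F)$. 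Combining this with the displayed inequality proves the proposition.

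The only real subtlety is the choice of scale: one must take the net at scale $2r$ rather than $r$, precisely so that the balls $B(x,r)$ around distinct net points are disjoint; this is what makes the factor $\mbox{Vol}(2r)$ appear and what allows the sets $F_x$ to witness the packing number. Beyond this bookkeeping, the argument is a direct application of Lemma \ref{lemma: bounding a set with a net} and the hypothesis on inclusive radii, and it makes no difference whether $\Omega$ is finite, the inequalities being understood between (possibly infinite) cardinals.
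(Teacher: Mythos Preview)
Your proof is correct and follows essentially the same route as the paper: choose a maximal $2r$-net of $I(\Omega,r)$, apply Lemma \ref{lemma: bounding a set with a net} at scale $2r$, and bound the cardinality of the net by $P(\Omega,\mathcal F)$ via the disjoint sets $F_x\subset B(x,r)\subset\Omega$. The only cosmetic differences are that you invoke Zorn's lemma explicitly for the existence of a maximal net and that you reverse the order of the two steps; the paper first exhibits the packing and only at the end upgrades the net to a maximal one.
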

\begin{proof}  Let $N(I(\Omega,r),2r)$ be a $2r$-net of $I(\Omega,r)$. As $r$ is a uniform upper bound for the inclusive radii relative to  $\mathcal F$, we may choose for each point $x\in N(I(\Omega,r),2r)$ a set
$F_x\in\mathcal F$ such that 
\[
	F_x\subset B(x,r).
\]
Hence, as $x\in I(\Omega,r)$, it is obvious that
\[
	F_x\subset\Omega.
\]
Let $x,y\in N(I(\Omega,r),2r)$. By definition of a $2r$-net, 
\[
	(x\neq y)\implies (B(x,r)\cap B(y,r)=\emptyset). 
\]
Therefore, we obtain a disjoint union of subsets of $\Omega$,
\[
	\bigsqcup_{x\in N(I(\Omega,r),2r)}F_x\subset\Omega,
\]
where each subset $F_x$ belongs to the family $\mathcal F$. This proves that the cardinal of the net $N(I(\Omega,r),2r)$ is a lower bound for the packing number of $\Omega$ relative to $\mathcal F$:
\[
	|N(I(\Omega,r),2r)|\leq P(\Omega,\mathcal F).
\]
If we choose  the net $N(I(\Omega,r),2r)$ maximal among $2r$-nets of $I(\Omega,r)$ then Lemma \ref{lemma: bounding a set with a net} implies that
\[
	|I(\Omega,r)|\leq \mbox{Vol}(2r)\cdot|N(I(\Omega,r),2r)|.
\]
Finally:
\[
|I(\Omega,r)|\leq \mbox{Vol}(2r)\cdot P(\Omega,\mathcal F).	
\]
\end{proof}
We need the following concept, called a $1$-geodesic space in \cite{CorHar}.
\begin{defi}\label{definition: geodesic metric with integral values}(Geodesic distance with integer values.) Let $(X,d)$ be a metric space. We say that the distance $d$ is \emph{geodesic with integer values} if the following conditions hold true:
\begin{enumerate}
	\item $\forall x,y\in X,\, d(x,y)\in\mathbb N\cup\{0\}$,
	\item for all $x,z\in X$, for all integer $k$ such that $0\leq k\leq d(x,z)$, there exists $y\in X$ such that $d(x,y)=k$ and
	$$d(x,z)=d(x,y)+d(y,z).$$
	\end{enumerate}
\end{defi}

\begin{lemma}\label{lemma: comparing boundaries}(Comparing boundaries.) Let $(X,d)$ be a metric space with uniform growth $\emph{Vol}$. Assume $d$ is geodesic with integer values.
Let $\Omega\subset X$. Let $r>1$. Then
\[
	|\partial_r\Omega|\leq \emph{Vol}(r-1)\cdot |\partial_1\Omega|.
\] 
\end{lemma}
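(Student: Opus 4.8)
The plan is to exhibit a map $p\colon\partial_r\Omega\to\partial_1\Omega$ that displaces each point by at most $r-1$, and then to bound the cardinality of $\partial_r\Omega$ by counting the fibers of $p$, exactly as in the proof of Lemma \ref{lemma: bounding a set with a net}. Indeed, once such a $p$ is available, each fiber $p^{-1}(\{w\})$ is contained in the ball $B(w,r-1)$, so $|p^{-1}(\{w\})|\leq\mbox{Vol}(r-1)$ by the uniform growth hypothesis, and summing over the (finitely many relevant) $w$ in the image of $p$ gives $|\partial_r\Omega|\leq\mbox{Vol}(r-1)\cdot|\partial_1\Omega|$.

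To build $p$, fix $x\in\partial_r\Omega$. First I would note that we may assume $X\setminus\Omega$ is non-empty (otherwise $\partial_r\Omega=\emptyset$ and there is nothing to prove). Since $d$ takes values in $\mathbb N\cup\{0\}$ and $x\in\Omega$, the quantity $m=d(x,X\setminus\Omega)$ is an integer with $1\leq m\leq r$, and the infimum defining it is actually a minimum: there is $z\in X\setminus\Omega$ with $d(x,z)=m$. Next I would invoke the geodesic-with-integer-values property (Definition \ref{definition: geodesic metric with integral values}) applied to the pair $x,z$ with intermediate length $k=m-1$: it produces a point $y\in X$ with $d(x,y)=m-1$ and $d(x,z)=d(x,y)+d(y,z)$, whence $d(y,z)=1$. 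I then set $p(x)=y$.

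The one point that needs care — and the only place where the minimality of $m$ is used — is the verification that $y$ indeed lies in $\partial_1\Omega$. On the one hand $d(y,X\setminus\Omega)\leq d(y,z)=1$, so it suffices to check $y\in\Omega$; but if instead $y\in X\setminus\Omega$, then $d(x,y)=m-1<m=d(x,X\setminus\Omega)$, contradicting the definition of $m$. Hence $y\in\Omega$, so $y\in\partial_1\Omega$, while $d(x,p(x))=m-1\leq r-1$ because $m\leq r$. This is precisely the displacement bound the counting argument requires.

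The remaining step is the routine counting already indicated in the first paragraph, where I expect no difficulty. I expect the genuinely load-bearing hypotheses to be the integer-valuedness of $d$ (guaranteeing that the infimum defining $m$ is attained and that $m-1$ is an admissible intermediate length) together with the geodesic property (producing the intermediate point $y$ at distance exactly $1$ from the complement); uniform growth enters only at the very end, to bound the fibers of $p$.
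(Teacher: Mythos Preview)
Your proof is correct and follows essentially the same approach as the paper's: choose for each $x\in\partial_r\Omega$ a nearest point $z$ in $X\setminus\Omega$, use the geodesic-with-integer-values property to find an intermediate $y$ at distance $1$ from $z$, verify $y\in\partial_1\Omega$ via minimality of $z$, and then bound fibers of the resulting map $p$ by $\mathrm{Vol}(r-1)$. Your handling of the edge case $X\setminus\Omega=\emptyset$ and the explicit check that the infimum is attained are minor clarifications, but the argument is otherwise identical to the paper's.
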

\begin{proof} Let $x\in \partial_r\Omega$. By definition of $\partial_r\Omega$ the set 
	\[
		Z=\{z\in X\setminus\Omega: d(x,z)\leq r\}
	\]
is non-empty. As $d$ takes integer values, there exists $z_0\in Z$ such that  
\[
	d(x,z_0)=d(x,Z).
\]
Notice that $x\neq z_0$ because $x\in\Omega$ and $z_0\in X\setminus\Omega$. Hence $d(x,z_0)\geq 1$. As $d$ is geodesic with integral values, there exists $y\in X$ such that $d(y,z_0)=1$ and such that
\[
	d(x,z_0)=d(x,y)+d(y,z_0).
\] 
The minimality of $z_0$ implies that $y\in\Omega$. From the facts that $z_0\in X\setminus\Omega$ and $d(y,z_0)=1$, we conclude that $y\in\partial_1\Omega$. Notice also that 
\[
	d(x,y)=d(x,z_0)-1=d(x,Z)-1\leq r-1.
\]
Denoting $y=p(x)$ we see that we have constructed a map
$$p:\partial_r\Omega\to\partial_1\Omega$$
such that $d(x,p(x))\leq r-1$. As in the proof of Lemma \ref{lemma: bounding a set with a net},	we bound the cardinalities of the fibres of $p$ and obtain
	\[
		|\partial_r\Omega|\leq \mbox{Vol}(r-1)\cdot |\partial_1\Omega|.
	\]

\end{proof}

The next lemma brings a lower bound on the number of interior points in a finite set. It will be applied to some finite sets of vertices in a graph which have a ``a relatively small boundary''. The idea of a finite set with ``a relatively small boundary'' is formalized through the definition of a F{\o}lner sequence (see Definition \ref{definition: amenable graph} below).
\begin{lemma}\label{lemma: folner to bound below interior points}(F{\o}lner to bound below interior points.)
 Let $(X,d)$ be a metric space with uniform growth $\emph{Vol}$. Assume $d$ is geodesic with integer values.
Let $\Omega\subset X$. Let $r>1$. Assume   $\epsilon\geq 0$ is such that
\[
	\emph{Vol}(r)\cdot|\partial_1\Omega|\leq\epsilon|\Omega|.
\]	
Then the number of $r$-interior points is bounded below as follows:
\[
	|I(\Omega,r)|\geq(1-\epsilon)|\Omega|.
\]
\end{lemma}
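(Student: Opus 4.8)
The plan is to reduce the statement to a single counting identity combined with the boundary-comparison Lemma \ref{lemma: comparing boundaries} already established. The starting observation is that the definitions of the $r$-interior and the $r$-boundary partition $\Omega$: every point $x\in\Omega$ satisfies either $d(x,X\setminus\Omega)>r$ or $d(x,X\setminus\Omega)\leq r$, and these two alternatives are mutually exclusive and exhaustive. Hence $\Omega=I(\Omega,r)\sqcup\partial_r\Omega$, which yields the cardinality identity $|\Omega|=|I(\Omega,r)|+|\partial_r\Omega|$. (If $X\setminus\Omega=\emptyset$ then $\partial_1\Omega=\emptyset$, the hypothesis is vacuous, $I(\Omega,r)=\Omega$, and the conclusion holds trivially since $\epsilon\geq 0$; so I may assume the complement is non-empty.)

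Next I would bound $|\partial_r\Omega|$. Since $r>1$, Lemma \ref{lemma: comparing boundaries} applies and gives $|\partial_r\Omega|\leq\mbox{Vol}(r-1)\cdot|\partial_1\Omega|$. The uniform-growth function $\mbox{Vol}$ is non-decreasing in its argument, because a ball of smaller radius is contained in a ball of larger radius, so $\mbox{Vol}(r-1)\leq\mbox{Vol}(r)$. Combining this monotonicity with the standing hypothesis $\mbox{Vol}(r)\cdot|\partial_1\Omega|\leq\epsilon|\Omega|$ produces the chain $|\partial_r\Omega|\leq\mbox{Vol}(r-1)\cdot|\partial_1\Omega|\leq\mbox{Vol}(r)\cdot|\partial_1\Omega|\leq\epsilon|\Omega|$.

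Finally I would substitute this bound into the cardinality identity: $|I(\Omega,r)|=|\Omega|-|\partial_r\Omega|\geq|\Omega|-\epsilon|\Omega|=(1-\epsilon)|\Omega|$, which is exactly the asserted inequality.

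I do not expect a genuine obstacle here. The only points requiring a moment of care are the elementary facts that the $r$-interior and the $r$-boundary partition $\Omega$, and that $\mbox{Vol}$ is monotone, both immediate from the definitions. The substantive content of the lemma is imported wholesale from Lemma \ref{lemma: comparing boundaries}, whose ``geodesic with integer values'' hypothesis does the real work of trading a thick $r$-boundary for the thin $1$-boundary at the cost of the factor $\mbox{Vol}(r-1)$; once that estimate is in hand, the present lemma is pure bookkeeping.
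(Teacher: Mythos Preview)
Your proof is correct and follows essentially the same approach as the paper's: partition $\Omega$ into $I(\Omega,r)$ and $\partial_r\Omega$, invoke Lemma \ref{lemma: comparing boundaries} to bound $|\partial_r\Omega|$ by $\mbox{Vol}(r)\cdot|\partial_1\Omega|$, and apply the hypothesis. Your version is in fact slightly more careful, making explicit the monotonicity step $\mbox{Vol}(r-1)\leq\mbox{Vol}(r)$ and the trivial case $X\setminus\Omega=\emptyset$, both of which the paper leaves implicit.
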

\begin{proof} According to Lemma \ref{lemma: comparing boundaries},
	\[
		|\partial_r\Omega|\leq \mbox{Vol}(r)\cdot|\partial_1\Omega|.
	\]
Hence,
\begin{align*}
	|I(\Omega,r)|&=|\Omega\setminus\partial_r\Omega|=|\Omega|-|\partial_r\Omega|\\
				 &\geq |\Omega|-\mbox{Vol}(r)\cdot|\partial_1\Omega|\geq |\Omega|-\epsilon|\Omega|\\
				 &=(1-\epsilon)|\Omega|.
\end{align*}
\end{proof}

\section{Groups and quasi-homogeneous graphs}
\subsection{Groups acting on graphs}\label{subsection: group action}
Let $G$ be a group acting (on the left) on a graph $(\Gamma, V,E,o,t,\iota)$. It means that $G$ acts both on $V$ and on $E$ and that the two actions are compatible in the sense that
$$\forall g\in G, \forall e\in E,\, o(ge)=go(e),\, t(ge)=gt(e), \iota(ge)=g\iota(e).$$
For example, a group $G$ acts on any of its Cayley graph $\mathcal C(G,S)$ and the action preserves the path metric.
If the group $G$ acts on a weighted graph, we always assume that the weights are invariant:
\[
	\forall g\in G, \forall x\in V, m_V(gx)=m_V(x), \forall e\in E, m_E(ge)=m_E(e).
\]
The permutation representation associated to a  $m_V$-preserving $G$-action on $V$  is the unitary representation defined by the following conditions:
\[
	\pi:G\to U(l^2(V,m_V)),
\]
\[
	\forall g\in G,\forall f\in l^2(V,m_V),\forall x\in V,\, (\pi(g)f)(x)=f(g^{-1}x).
\]
If the weighted graph $\Gamma$ satisfies the Sunada-Sy condition, and if the Schr\"odinger operator $H=\Delta+q$ is defined with the help of a $G$-invariant potential $q$, then 
\[
	\forall g\in G,\, \pi(g)H=H\pi(g).
\]
Hence $H$ belongs to the \emph{commutant}
\[
	\pi(G)'=\{A\in B(l^2(V,m_V): \forall g\in G, \pi(g)A=A\pi(g)\}
\]
of $\pi$. It is straightforward to check that $\pi(G)'$ is an algebra, which is stable under conjugation, contains the identity, and which is closed with respect to the strong operator topology. (In other words, it is a von Neumann algebra.)

\subsection{The von Neumann trace}\label{subsection: the von Neumann trace}

\begin{defi} A graph $\Gamma$ is \emph{quasi-homogeneous} if there exists a group $G$ acting on $\Gamma$ (in the sense explained in Subsection \ref{subsection: group action} above) such that the action of $G$ on the vertex set of $\Gamma$ has a finite number of orbits. (We don't require freeness of the action.)	
\end{defi}
Here and in what follows, no hypothesis on the stabilizers of the action is needed.

\begin{defi}\label{definition: von Neumann trace}(The von Neumann trace of a positive operator.)
Let $\Gamma$ be a weighted graph and let $G$ be a group acting on $\Gamma$. We assume that the weights $m_V$ and $m_E$ are $G$-invariant. Assume the vertex set $V$ of $\Gamma$ decomposes as a finite union of $G$-orbits (in the case the degrees of the vertices of $\Gamma$ are finite, this is equivalent to assume that the action of $G$ is cocompact). Let $D$ be a fundamental domain for this action. In other words, the vertex set $V$ is the disjoint union of the orbits of the vertices of the fundamental domain:
\[
	V=\bigsqcup_{x\in D} Gx.
\]
Let $A\in B(l^2(V,m_V)$ be in the commutant $\pi(G)'$ of the permutation representation $\pi(G)$ as defined in Subsection \ref{subsection: group action} above. Assume that $A$ is a positive operator, that is:
\[
	\forall \varphi\in l^2(V,m_V),\, \langle A\varphi,\varphi\rangle\geq 0.
\]
We define 
\[
	\tau(A)=\sum_{x\in D}\langle A\delta_x,\delta_x\rangle.
\]
We normalize $\tau$ and define
\emph{the von Neumann trace of $A$} as 
\[
	\tau_1(A)=\frac{1}{|D|}\sum_{x\in D}\langle A\delta_x,\delta_x\rangle.
\]
\end{defi}
We will use the following well-known properties of the von Neumann trace.
\begin{prop}\label{proposition: properties of the von Neumann trace}(Some properties of the von Neumann trace.) With the notation as above, let $A\in \pi(G)'$ be a positive operator. The following properties are true.
	\begin{enumerate}
		\item Neither $\tau(A)$ nor $\tau_1(A)$  depends on the choice of the fundamental domain,
		\item $0\leq \tau_1(A)\leq \|A\|$,
		\item $\tau_1(A)=0$ if and only if $A=0$.
	\end{enumerate}
\end{prop}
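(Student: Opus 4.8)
The plan is to reduce all three statements to the single computation $\pi(g)\delta_x=\delta_{gx}$ together with the fact that $A$, and any bounded Borel function of $A$, commutes with every $\pi(g)$. First I would record that for $g\in G$ and $x\in V$ one has $(\pi(g)\delta_x)(y)=\delta_x(g^{-1}y)=\delta_{gx}(y)$, so $\pi(g)\delta_x=\delta_{gx}$; since the weights are $G$-invariant, $\pi(g)$ is unitary and $\|\delta_{gx}\|^2=m_V(gx)=m_V(x)=\|\delta_x\|^2$. I would also note that, since $\pi(G)'$ is a von Neumann algebra containing the positive operator $A$ and $\sqrt{\cdot}$ is a norm-limit of polynomials on the compact set $\sigma(A)$ (or, following the strong-convergence argument in the proof of Proposition \ref{proposition: spectral projections}), the square root $A^{1/2}=\Phi(\sqrt{\cdot})$ again lies in $\pi(G)'$.

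For statement (1), let $D$ and $D'$ be two fundamental domains. There is a bijection $D\to D'$ sending $x$ to the unique point $g_x x\in Gx\cap D'$. Then
\[
\langle A\delta_{g_x x},\delta_{g_x x}\rangle=\langle A\pi(g_x)\delta_x,\pi(g_x)\delta_x\rangle=\langle \pi(g_x)A\delta_x,\pi(g_x)\delta_x\rangle=\langle A\delta_x,\delta_x\rangle,
\]
using $A\pi(g_x)=\pi(g_x)A$ and the unitarity of $\pi(g_x)$. Summing over this bijection shows $\tau(A)$ is unchanged, and since $|D|=|D'|$ equals the number of orbits, $\tau_1(A)$ is unchanged as well. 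For statement (2), the lower bound is immediate from positivity, as each term $\langle A\delta_x,\delta_x\rangle\geq 0$. The upper bound follows from $\langle A\delta_x,\delta_x\rangle\leq\|A\|\,\|\delta_x\|^2$; averaging over $D$, where the $\delta_x$ are unit vectors in the combinatorial normalization $m_V\equiv 1$, yields $\tau_1(A)\leq\|A\|$.

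The heart of the proof is statement (3), whose forward implication is trivial. For the converse, assume $\tau_1(A)=0$. Since $A\geq 0$ each summand is nonnegative, so $\langle A\delta_x,\delta_x\rangle=0$ for every $x\in D$; writing $A=A^{1/2}A^{1/2}$ with $A^{1/2}$ self-adjoint gives $\|A^{1/2}\delta_x\|^2=\langle A\delta_x,\delta_x\rangle=0$, hence $A^{1/2}\delta_x=0$ for all $x\in D$. I would then propagate this across each orbit: an arbitrary vertex is $gx$ for some $x\in D$, and since $A^{1/2}\in\pi(G)'$,
\[
A^{1/2}\delta_{gx}=A^{1/2}\pi(g)\delta_x=\pi(g)A^{1/2}\delta_x=0.
\]
Thus $A^{1/2}$ annihilates every $\delta_x$, $x\in V$; as finite linear combinations of the $\delta_x$ are dense in $l^2(V,m_V)$ and $A^{1/2}$ is bounded, $A^{1/2}=0$, whence $A=(A^{1/2})^2=0$.

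The main obstacle is precisely this last propagation step: passing from vanishing on a fundamental domain to vanishing everywhere. It does not suffice that $A$ commute with the $G$-action; one needs $A^{1/2}$ to commute as well, which is why introducing the square root (and justifying $A^{1/2}\in\pi(G)'$ through functional calculus) is the essential device. The faithfulness in statement (3) would fail for a positive operator commuting with $A$ but not with the group, so the von Neumann algebra structure of $\pi(G)'$ is used crucially here.
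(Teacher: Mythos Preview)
Your argument is correct, and for (1) and (2) it matches the paper almost verbatim. The only real difference is in (3): you first pass to $A^{1/2}$, obtain $A^{1/2}\delta_x=0$ for $x\in D$, and then invoke $A^{1/2}\in\pi(G)'$ to propagate along orbits. The paper reverses the two steps: since by (1) the value of $\tau(A)$ is independent of the fundamental domain and every vertex lies in some fundamental domain, one gets $\langle A\delta_x,\delta_x\rangle=0$ for \emph{all} $x\in V$ using only $A\in\pi(G)'$; only then does one write $A=B^2$ to conclude $B\delta_x=0$ for every $x$ and hence $B=0$. So your closing claim that ``it does not suffice that $A$ commute with the $G$-action; one needs $A^{1/2}$ to commute as well'' overstates the difficulty --- the paper's ordering shows that commutation of $A$ alone already propagates the vanishing, and the square root need not lie in $\pi(G)'$ for the argument to go through. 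Your route is valid but slightly heavier than necessary. (A side remark: your appeal to ``the combinatorial normalization $m_V\equiv 1$'' in (2) is not among the hypotheses; the paper simply writes $|\langle A\delta_x,\delta_x\rangle|\le\|A\|$ via Cauchy--Schwarz without commenting on $\|\delta_x\|$, so both arguments implicitly use $m_V(x)\le 1$ at this point.)
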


\begin{theo}\label{theorem: boundaries to bound von Neuman traces}(Boundaries to bound von Neumann traces.) Let $\Gamma$ be a connected weighted graph satisfying the Sunada-Sy condition. Let $G$ be a group acting on $\Gamma$ (as defined in Subsection \ref{subsection: group action}). (We make no freeness hypothesis.) We assume the weigths $m_V$ and $m_E$ are $G$-invariant. Let $H=\Delta+q$ be a Schr\"odinger operator on $\Gamma$ defined by a real valuated $G$-invariant potential $q$. Suppose the action of $G$ on the vertex set $V$ of $\Gamma$ has  a finite number of orbits. Let $\mathcal F$ be the family of all fundamental domains for the action of $G$ on $V$. Let $\Omega\subset V$ be a finite subset. Assume the packing number $P(\Omega,\mathcal F)$ of $\Omega$ with respect to the family $\mathcal F$ is nonzero: in other words, $\Omega$ contains as a subset at least one fundamental domain. Let $\lambda\in\mathbb R$. Let $E_{\{\lambda\}}$ be the spectral projection onto the $\lambda$-eigenspace of $H$.  If $H$ satisfies $\lambda$-unicity on $\Omega$ then
	\[
		\tau \left(E_{\{\lambda\}}\right)\leq\frac{|\partial_2\Omega|}{P(\Omega,\mathcal F)}.
	\]
\end{theo}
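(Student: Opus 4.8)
The plan is to reduce the statement to Proposition \ref{proposition: boundaries to bound traces}, which already bounds the sum of the diagonal entries $\langle E_{\{\lambda\}}\delta_x,\delta_x\rangle$ over the finite set $\Omega$ by $|\partial_2\Omega|$, and then to exploit both the $G$-invariance of these diagonal entries and the independence of the von Neumann trace from the choice of fundamental domain. The packing number enters as the device that lets us pack disjoint copies of a fundamental domain inside $\Omega$, each copy contributing exactly $\tau(E_{\{\lambda\}})$.

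First I would verify that $E_{\{\lambda\}}$ lies in the commutant $\pi(G)'$, so that its von Neumann trace is defined in the sense of Definition \ref{definition: von Neumann trace}. Since the potential $q$ is $G$-invariant, the discussion in Subsection \ref{subsection: group action} gives $\pi(g)H=H\pi(g)$ for every $g\in G$; as $E_{\{\lambda\}}$ is a spectral projection of $H$, the third item of Proposition \ref{proposition: spectral projections} yields $\pi(g)E_{\{\lambda\}}=E_{\{\lambda\}}\pi(g)$, hence $E_{\{\lambda\}}\in\pi(G)'$. By the second item of the same proposition $E_{\{\lambda\}}$ is positive. Consequently $\tau(E_{\{\lambda\}})$ is well-defined, non-negative, and -- the point that is crucial below -- independent of the chosen fundamental domain by the first item of Proposition \ref{proposition: properties of the von Neumann trace}.

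Next I would extract from $\Omega$ a family witnessing the packing number. By definition of $P(\Omega,\mathcal F)$ there exist $P:=P(\Omega,\mathcal F)\geq 1$ pairwise disjoint fundamental domains $F_1,\dots,F_P\in\mathcal F$ with $\bigsqcup_{i=1}^{P}F_i\subset\Omega$. Because $\tau(E_{\{\lambda\}})$ does not depend on the fundamental domain, each $F_i$ satisfies
\[
	\sum_{x\in F_i}\langle E_{\{\lambda\}}\delta_x,\delta_x\rangle=\tau\left(E_{\{\lambda\}}\right).
\]
Summing over $i$, and using that the $F_i$ are disjoint subsets of $\Omega$ on which every term $\langle E_{\{\lambda\}}\delta_x,\delta_x\rangle$ is non-negative by positivity of $E_{\{\lambda\}}$, I obtain
\[
	P\cdot\tau\left(E_{\{\lambda\}}\right)=\sum_{x\in\bigsqcup_iF_i}\langle E_{\{\lambda\}}\delta_x,\delta_x\rangle\leq\sum_{x\in\Omega}\langle E_{\{\lambda\}}\delta_x,\delta_x\rangle.
\]

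Finally, the hypothesis of $\lambda$-uniqueness on $\Omega$ permits the application of Proposition \ref{proposition: boundaries to bound traces}, which bounds the right-hand sum by $|\partial_2\Omega|$; dividing by $P=P(\Omega,\mathcal F)$ gives the claimed inequality. I expect the only delicate point to be the verification that $E_{\{\lambda\}}$ belongs to the commutant, since this is exactly what simultaneously guarantees that the trace is defined and that each packed fundamental domain contributes precisely $\tau(E_{\{\lambda\}})$. Once that is in place, the argument is merely an assembly of the cited results, with the packing number supplying the combinatorial multiplicity $P$ that converts the finite-set bound of Proposition \ref{proposition: boundaries to bound traces} into a bound on the trace.
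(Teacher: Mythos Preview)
Your proof is correct and follows essentially the same route as the paper: verify $E_{\{\lambda\}}\in\pi(G)'$ via Proposition \ref{proposition: spectral projections}, use Proposition \ref{proposition: properties of the von Neumann trace} so that each packed fundamental domain contributes exactly $\tau(E_{\{\lambda\}})$, sum over the $P(\Omega,\mathcal F)$ disjoint domains, compare with $\sum_{x\in\Omega}\langle E_{\{\lambda\}}\delta_x,\delta_x\rangle$ using positivity, and finish with Proposition \ref{proposition: boundaries to bound traces}. Your write-up is in fact slightly more explicit than the paper's in justifying the inequality step via positivity of $E_{\{\lambda\}}$.
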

\begin{proof} Let $n=P(\Omega,\mathcal F)$. Let $F_1,\dots,F_n$ be a collection of disjoint fundamental domains included in $\Omega$. As recalled in Subsection \ref{subsection: spectral projections}, the spectral projection  $E_{\{\lambda\}}$ is positive and belongs to the commutant $\pi(G)'$. Therefore, we may  apply Proposition \ref{proposition: properties of the von Neumann trace} to deduce that the von Neumann trace of $E_{\{\lambda\}}$ does not depend on the choice of the fundamental domain. We therefore have:
	\begin{align*}
		n\cdot\tau \left(E_{\{\lambda\}}\right)&=n\sum_{x\in F_1}\langle E_{\{\lambda\}}\delta_x,\delta_x\rangle\\
		                                  &=\sum_{x\in \bigsqcup_{i=1}^n F_i}\langle E_{\{\lambda\}}\delta_x,\delta_x\rangle\\
										  &\leq \sum_{x\in\Omega}\langle E_{\{\lambda\}}\delta_x,\delta_x\rangle.\\
\end{align*}
As $H$ satisfies $\lambda$-unicity, Proposition \ref{proposition: boundaries to bound traces} applies hence we deduce that
\[
	\sum_{x\in\Omega}\langle E_{\{\lambda\}}\delta_x,\delta_x\rangle\leq |\partial_2\Omega|.
\]	
\end{proof}

\subsection{Quasi-homogeneous graphs with a F{\o}lner sequence}
\begin{defi}\label{definition: amenable graph}(F{\o}lner sequence in a graph.) Let $\Gamma$ be a connected graph. A \emph{F{\o}lner} sequence in $\Gamma$ is a sequence $(\Omega_n)_{n\geq 0}$ of finite subsets of the vertex set of $\Gamma$ such that 
	\[
		\lim_{n\to\infty}\frac{|\partial_1\Omega_n|}{|\Omega_n|}=0.
	\]	
\end{defi}

\begin{theo}\label{theorem: general principles}(Continuity of the integrated density of states.) Let $\Gamma$ be a connected weighted graph satisfying the Sunada-Sy condition and admitting a F{\o}lner sequence. Let $G$ be a group acting on $\Gamma$ (as defined in Subsection \ref{subsection: group action}). (We make no freeness hypothesis.) We assume the weights $m_V$ and $m_E$ are $G$-invariant. Let $H=\Delta+q$ be a Schr\"odinger operator on $\Gamma$ defined by a real valuate $G$-invariant potential $q$. Suppose the action of $G$ on the vertex set $V$ of $\Gamma$ has  a finite number of orbits. Let $\lambda_0\in\mathbb R$. The following conditions are equivalent.
	\begin{enumerate}
		\item The operator $H$ satisfies $\lambda_0$-uniqueness on any finite subset of $V$.
		\item There exists a F{\o}lner sequence $\Omega_n$ in $V$ such that  $H$ satisfies $\lambda_0$-uniqueness on each $\Omega_n$.
		\item The spectral projection $E_{\{\lambda_0\}}$ of $H$ is equal to zero. 
		\item The operator $H$ doesn't admit a $\lambda_0$-eigenfunction.
		\item The operator $H$ doesn't admit a $\lambda_0$-eigenfunction with finite support.
		\item The integrated density of states $\lambda\mapsto\tau_1(E_{\lambda})$ of $H$ is continuous at $\lambda_0$.
		
	\end{enumerate}
\end{theo}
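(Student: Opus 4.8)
The plan is to prove the six conditions equivalent by closing the cycle $(1)\Rightarrow(2)\Rightarrow(3)\Rightarrow(4)\Rightarrow(5)\Rightarrow(1)$ and separately establishing $(3)\Leftrightarrow(6)$. Several of these links are immediate. The equivalence $(1)\Leftrightarrow(5)$ is purely definitional: a finitely supported $\lambda_0$-eigenfunction is exactly a $\lambda_0$-eigenfunction vanishing outside its (finite) support, so the absence of such an eigenfunction is the same as $\lambda_0$-uniqueness on every finite subset. The implication $(3)\Rightarrow(4)$ is Proposition~\ref{proposition: eigenspaces}: the range of $E_{\{\lambda_0\}}$ is the $\lambda_0$-eigenspace, so the projection vanishes iff there is no $\lambda_0$-eigenfunction. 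The implications $(4)\Rightarrow(5)$ and $(1)\Rightarrow(2)$ are trivial, the latter using the standing hypothesis that $\Gamma$ admits a F{\o}lner sequence. This reduces everything to the single substantial implication $(2)\Rightarrow(3)$, together with the continuity step $(3)\Leftrightarrow(6)$.

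For $(2)\Rightarrow(3)$ I would run the localization estimate quantitatively along the F{\o}lner sequence. First I record the geometric preliminaries that make the large-scale lemmas above applicable: the path-metric on a connected graph is geodesic with integer values in the sense of Definition~\ref{definition: geodesic metric with integral values}, and since $G$ acts with finitely many orbits the ball-sizes $|B(x,r)|$ are constant along orbits, hence (in the locally finite setting of interest) take finitely many finite values, so $\mathrm{Vol}(r)<\infty$ for every $r$, i.e.\ $\Gamma$ has uniform growth. Taking $\mathcal F$ to be the family of all fundamental domains, I would check that the inclusive radius relative to $\mathcal F$ is uniformly bounded by some $R$: because each orbit is $G$-invariant, the distance from a vertex to a fixed orbit is constant along the orbit of that vertex, hence takes finitely many values and is bounded, so choosing one representative of each of the finitely many orbits inside a ball $B(x,R)$ produces a fundamental domain inside that ball. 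Now let $\Omega_n$ be a F{\o}lner sequence on which $H$ satisfies $\lambda_0$-uniqueness. Theorem~\ref{theorem: boundaries to bound von Neuman traces} gives, for every $n$ large enough that $\Omega_n$ contains a fundamental domain,
\[
\tau\left(E_{\{\lambda_0\}}\right)\le\frac{|\partial_2\Omega_n|}{P(\Omega_n,\mathcal F)}.
\]
I then bound the numerator by $|\partial_2\Omega_n|\le\mathrm{Vol}(1)\,|\partial_1\Omega_n|$ using Lemma~\ref{lemma: comparing boundaries} with $r=2$, and the denominator from below by combining Proposition~\ref{proposition: interior points as a lower bound for the packing number} (with the uniform inclusive radius $R$, enlarged if necessary so that $R>1$), which gives $P(\Omega_n,\mathcal F)\ge |I(\Omega_n,R)|/\mathrm{Vol}(2R)$, with Lemma~\ref{lemma: folner to bound below interior points}, which gives $|I(\Omega_n,R)|\ge(1-\epsilon_n)|\Omega_n|$ for $\epsilon_n=\mathrm{Vol}(R)\,|\partial_1\Omega_n|/|\Omega_n|$. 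Chaining these yields
\[
\tau\left(E_{\{\lambda_0\}}\right)\le\frac{\mathrm{Vol}(1)\,\mathrm{Vol}(2R)}{1-\epsilon_n}\cdot\frac{|\partial_1\Omega_n|}{|\Omega_n|}.
\]
The F{\o}lner condition $|\partial_1\Omega_n|/|\Omega_n|\to0$ forces $\epsilon_n\to0$ and the right-hand side to $0$, so $\tau(E_{\{\lambda_0\}})\le0$; positivity of the trace and Proposition~\ref{proposition: properties of the von Neumann trace} then give $E_{\{\lambda_0\}}=0$, which is $(3)$.

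For $(3)\Leftrightarrow(6)$ I would apply Proposition~\ref{proposition: continuity of the integrated density} to the finite set $D=\{\delta_x:x\in F\}$, where $F$ is a fundamental domain, so that the integrated density of states is, up to the positive normalizing constant $1/|F|$, exactly the function $\lambda\mapsto\sum_{x\in F}\langle E_\lambda\delta_x,\delta_x\rangle$ of that proposition. It asserts continuity at $\lambda_0$ iff $\sum_{x\in F}\langle E_{\{\lambda_0\}}\delta_x,\delta_x\rangle=0$, i.e.\ iff $\tau(E_{\{\lambda_0\}})=0$, which by Proposition~\ref{proposition: properties of the von Neumann trace} is equivalent to $E_{\{\lambda_0\}}=0$. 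This gives $(3)\Leftrightarrow(6)$ and completes the list of implications.

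The main obstacle is the quantitative step $(2)\Rightarrow(3)$, and within it the verification that the abstract large-scale lemmas genuinely apply: one must confirm uniform growth and, above all, the uniform bound on inclusive radii relative to the family of fundamental domains, which is precisely where the finiteness of the number of orbits (and not any hypothesis on stabilizers) is used. Once this geometric setup is in place, the remaining effort is the bookkeeping that keeps the constants $\mathrm{Vol}(1)$ and $\mathrm{Vol}(2R)$ fixed while the F{\o}lner ratio drives the bound to zero, which is routine. I would also flag that the bare Sunada-Sy condition permits locally infinite graphs, for which $\mathrm{Vol}(r)$ could fail to be finite; I would therefore restrict attention to the locally finite case (covering the adjacency and combinatorial Laplacian settings of interest), noting that there the orbit-constancy of vertex degrees under finitely many orbits makes the degrees uniformly bounded and uniform growth automatic.
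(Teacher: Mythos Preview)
Your proof is correct and follows essentially the same route as the paper: the same cycle $(1)\Rightarrow(2)\Rightarrow(3)\Rightarrow(4)\Rightarrow(5)\Rightarrow(1)$ together with $(3)\Leftrightarrow(6)$, with the substantive step $(2)\Rightarrow(3)$ proved by the same chain (Theorem~\ref{theorem: boundaries to bound von Neuman traces}, Proposition~\ref{proposition: interior points as a lower bound for the packing number}, Lemma~\ref{lemma: folner to bound below interior points}, Lemma~\ref{lemma: comparing boundaries}, then Proposition~\ref{proposition: properties of the von Neumann trace}). The only cosmetic difference is that the paper fixes $\epsilon=1/2$ once $n$ is large, whereas you carry a variable $\epsilon_n\to 0$; both give the same conclusion. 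Your explicit flag about local finiteness is apt: the paper silently assumes finite balls when it defines $\mathrm{Vol}$ via a fundamental domain, so your caveat is a genuine clarification rather than a departure.
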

\begin{proof} We first show $$(1)\implies(2)\implies(3)\implies(4)\implies(5)\implies(1),$$ then $(3)\iff(6)$.
The implication $(1)\implies(2)$ is obvious. In order to prove $(2)\implies(3)$, assume we have a F{\o}lner sequence $\Omega_n$ such that $H$ satisfies $\lambda_0$-unicity on each $\Omega_n$.	Let us consider the family $\mathcal F\subset\mathcal P(V)$ of all fundamental domains for the action of $G$ on $V$. According to Theorem \ref{theorem: boundaries to bound von Neuman traces},
	\[
		\tau \left(E_{\{\lambda_0\}}\right)\leq\frac{|\partial_2\Omega_n|}{P(\Omega_n,\mathcal F)}.
	\]	
Let us check that the hypothesis of Proposition \ref{proposition: interior points as a lower bound for the packing number} are fulfilled. The group $G$ acts by isometries on  the vertex set $(V,d)$ of $\Gamma$ endowed with its path metric and the action has a finite number of orbits. This implies that the  growth of $(V,d)$ is uniform (define $\mbox{Vol}$ as  the maximal growth over all vertices belonging to a finite fundamental domain for the action of $G$ on $V$). This also implies the existence of  a uniform upper bound $1< r<\infty$  for the inclusive radii relative to  $\mathcal F$:
	\[
		\forall x\in X,\,\exists F_x\in\mathcal F: F_x\subset B(x,r).
	\]
(Choose a fundamental domain $D$ for the action of $G$ on $V$. For each $x\in V$, choose $r_x$ big enough so that $D\subset B(x,r_x)$. As $D$ is finite,  $r=\max_{x\in D}r_x<\infty$. If $g\in G$ then $gD\subset B(gx,r_x)$. This shows that $r$ is a (bad) uniform upper bound for the inclusive radii relative to  $\mathcal F$.) Applying Proposition \ref{proposition: interior points as a lower bound for the packing number}, we obtain that the number of $r$-interior points of any set $\Omega_n$ is bounded in terms of the uniform growth and the packing number:
\[
	|I(\Omega_n,r)|\leq \mbox{Vol}(2r)\cdot P(\Omega_n,\mathcal F).	
\]
The metric space  $(V,d)$ is geodesic in the sense of Definition \ref{definition: geodesic metric with integral values}. Applying Lemma \ref{lemma: folner to bound below interior points} to $\epsilon=1/2$ and to any $\Omega_n$ such that
\begin{equation}\label{equation: condition on Omega_n}
	\mbox{Vol}(r)\cdot|\partial_1\Omega_n|\leq\frac{1}{2}|\Omega_n|,
\end{equation}
we obtain: 
\[
	|I(\Omega_n,r)|\geq\frac{1}{2}|\Omega_n|.
\]
According to Lemma \ref{lemma: comparing boundaries}, 
\[
	|\partial_r\Omega|\leq \mbox{Vol}(r-1)\cdot |\partial_1\Omega|.
\]
Eventually, we come to the conclusion that  
\begin{equation}\label{equation: last inequality}
		\tau\left(E_{\{\lambda_0\}}\right)\leq 2\mbox{Vol}(2r)\mbox{Vol}(1)\frac{|\partial_1\Omega_n|}{|\Omega_n|},	
\end{equation}
providing $\Omega_n$ satisfies Inequality (\ref{equation: condition on Omega_n}).
By definition of a F{\o}lner sequence,
\[
	\lim_{n\to\infty}\frac{|\partial_1\Omega_n|}{|\Omega_n|}=0
\]
Hence Inequality (\ref{equation: condition on Omega_n}) above holds if $n$ is big enough. Letting $n$ goes to infinity in Inequality (\ref{equation: last inequality}) above we deduce that
$\tau\left(E_{\{\lambda_0\}}\right)=0$. According to Subsection \ref{subsection: spectral projections}, the operator $E_{\{\lambda_0\}}$ is positive and belongs to the commutant $\pi(G)'$ because $H\in \pi(G)'$. Hence Proposition
\ref{proposition: properties of the von Neumann trace} applies to the operator $E_{\{\lambda_0\}}$:
\begin{equation}\label{equation: von Neumann definite}
	\tau\left(E_{\{\lambda_0\}}\right)=0 \iff E_{\{\lambda_0\}}=0.	
\end{equation}
This finishes the proof of $(2)\implies(3)$.
As recalled in Subsection \ref{subsection: spectral projections}, condition $(3)$ and condition $(4)$ in the theorem are equivalent. 
The implications $(4)\implies(5)$ and  $(5)\implies(1)$ are obvious. In order to prove the equivalence
$(3)\iff (6)$, recalling the end of Subsection \ref{subsection: spectral projections}, we see that the function
\[
	\lambda\mapsto\tau_1(\lambda)
\]
is continuous at $\lambda_0$ if and only if 
\[
\sum_{x\in D}\langle E_{\{\lambda_0\}}\delta_x,\delta_x\rangle=0,
\]
where $D$ is a fundamental domain for the action of $G$ on $V$. According to Equivalence (\ref{equation: von Neumann definite}) above,  this last condition is equivalent to $E_{\{\lambda_0\}}=0$.
\end{proof}

\end{document}